\newtheorem{theorem}{Theorem}[section]
\newtheorem{lemma}[theorem]{Lemma}
\newtheorem{prop}{Proposition}
\theoremstyle{definition}
\newtheorem{definition}[theorem]{Definition}
\newtheorem{rk}{Remark}
\theoremstyle{remark}
\newcommand{\N}{\mbox{$\mathbb{N}$}}
\numberwithin{equation}{section}
\begin{document}

\title{Examples of minimal set for IFSs.}

\author{N. Guelman, J.Iglesias and A.Portela.}

\address{N. Guelman, Universidad de La Rep\'ublica. Facultad de Ingenieria. IMERL. Julio
Herrera y Reissig 565. C.P. 11300. Montevideo, Uruguay }
\email{nguelman@fing.edu.uy}

\address{J. Iglesias, Universidad de La Rep\'ublica. Facultad de Ingenieria. IMERL. Julio
Herrera y Reissig 565. C.P. 11300. Montevideo, Uruguay}
\email{jorgei@fing.edu.uy }

\address{A. Portela, Universidad de La Rep\'ublica. Facultad de Ingenieria. IMERL. Julio
Herrera y Reissig 565. C.P. 11300. Montevideo, Uruguay }
\email{aldo@fing.edu.uy }

\begin{abstract}
We exhibit different examples of minimal sets for an IFS of homeomorphisms with rotation number equal to 0. It is proved that these examples are, from a topological point of view, the unique possible cases.

\end{abstract}
\maketitle

\section{Introduction.}

Let $f:S^{1}\to S^{1}$ be a homeomorphism. A minimal set for $f$ is an $f$-invariant closed set that is minimal (in the sense of inclusion).

Taking into account the rotation number of a homeomorphism $f:S^{1}\to S^{1}$
there exist three possibilities:
\begin{enumerate}
\item $f $ has a finite orbit i.e. there exists a finite minimal set,
\item $ S^1$ is minimal for $f $,
\item there is a unique minimal set that is a Cantor set.
\end{enumerate}

This trichotomy can be extended to actions of groups of
homeomorphisms on the circle (see for example \cite{n}):

 Let $G$ be a subgroup of homeomorphisms of $S^1$. Then
one of these three possibilities occur:

\begin{enumerate}
\item there exists a finite orbit of $G$,
\item $ S^1$ is minimal for $G $,
\item there is a unique minimal set that is a Cantor set.
\end{enumerate}

The aim of this paper is describe any minimal set for the action of a finitely generated semigroup.

Given n maps $f_1,...,f_n$, $f_i:S^{1}\to S^{1}$, let
 $IFS(f_1,...,f_n)$ be the set of finite composition of  $f_i$, that is any finite "word" of $f_1,....,f_n$. For any $x\in S^{1}$ we define orbit of  $x$ as $O(x)=\{\phi(x): \ \phi\in
IFS(f_1,...,f_n)       \}$.
A set  $M\subset S^{1}$ is
invariant if $f_i(M)\subset M$ for any $i=1,...,n$. It is obvious that $\phi(M)\subset M$ for any $ \phi\in
IFS(f_1,...,f_n)$. A non- empty set  $M$ is minimal, when it is closed, invariant and $\overline{O(x)}=M$ for any $x\in M$.

An $IFS$ can have infinite many minimal sets and it is possible that $\phi^{-1}(M)\neq M$ for an element of the family. This is a big difference with the case of a group action. There, any minimal set is invariant for the future and for the past.
The aim of this paper is to describe the  minimal sets of $IFS$, studied from a topological point of view, and to give examples of any possible case. We will construct examples where the rotation number of any generator is 0, so that some  examples could be $C^{1}$-robust.
Several authors have
constructed examples of minimal sets of $IFS$. In \cite{br} sufficient conditions are given for an $IFS$ generated by two elements that are close to rotation so that $S^{1}$ is a minimal set.
In fact, Duminy (see \cite{n}, section 3.3) has proved that if $f,g$ are diffeomorphisms $C^2$-close to identity, the minimal set for $IFS(f,g)$ is $S^1$.

In \cite{s} sufficient conditions are given for an $IFS$ generated by two elements with overlap and rotation number 0 so that the minimal set is a Cantor set.
 These construction are important tools for the construction of our examples.

 If $M\subset S^{1}$ is a closed set, we call $\mathcal{I}$ the set of connected components of the interior of $M$, $int(M)$, and  $K$ the maximal Cantor set included in  $M\setminus int (M)$. We will prove that we can write $M$ as the disjoint union of $ \mathcal{I}$, $ K$ and $ N$, where $N$ is a countable set.

In \cite{br} is proven a trichotomy for a minimal set of an $IFS$: it is finite, it is a Cantor set or it has non-empty interior. We will improve this result proving the following

\begin{theorem}
\label{theor}
Let $f_1,...,f_n$ be open functions, $f_i:I\to I$ where $I$ is a closed interval or $S^{1}$ and $M=\mathcal{I}\cup K\cup N$ is minimal set for the family $IFS(f_1,...,f_n)$.
The possibilities for M are:

\begin{enumerate}

 \item If $int(M)=\emptyset$  then
M is finite or Cantor set.

 \item If $int(M)\neq\emptyset$  then
 \begin{itemize}
 \item M is $S^1$,
 \item $M= \mathcal{I}\cup K\cup N$ with     $ \partial \mathcal{I}\cap N\neq\emptyset$,
 \item  $M= \mathcal{I}\cup N$, or
 \item $M= \mathcal{I}\cup K$.
 \end{itemize}

\end{enumerate}
\end{theorem}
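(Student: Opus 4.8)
The plan is to exploit minimality through a single mechanism: a minimal set has no proper, nonempty, closed, forward-invariant subset (if $A\subsetneq M$ is closed, nonempty and satisfies $f_i(A)\subset A$, then $\overline{O(x)}\subset A\subsetneq M$ for $x\in A$, contradicting minimality). So the whole argument reduces to producing, in each regime, a closed forward-invariant subset built canonically from the topology of $M$, and then letting minimality force it to be $\emptyset$ or all of $M$. Throughout I use that an open continuous map on $I$ or $S^1$ is locally injective (locally strictly monotone): on an interval this is immediate since a continuous open map is strictly monotone, and on $S^1$ it follows by restricting to a small arc whose image lies in a proper arc. I also take for granted the decomposition $M=\mathcal I\cup K\cup N$ announced before the theorem, where $\mathcal I=int(M)$, $K$ is the maximal Cantor set in $\partial M=M\setminus int(M)$, and $N$ is the countable remainder.

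For part (1), assume $int(M)=\emptyset$. Let $M'$ be the set of accumulation points of $M$; it is closed and contained in $M$. First I would show $M'$ is forward invariant: given $x\in M'$ and distinct $x_k\to x$ in $M$, continuity gives $f_i(x_k)\to f_i(x)\in M$; if $f_i(x)$ were isolated in $M$ then $f_i(x_k)=f_i(x)$ for large $k$, contradicting local injectivity of $f_i$ near $x$, so $f_i(x)\in M'$. By minimality $M'=\emptyset$ or $M'=M$. If $M'=\emptyset$ then $M$ is closed and discrete in a compact space, hence finite. If $M'=M$ then $M$ is perfect; since $int(M)=\emptyset$, $M$ contains no arc, so its components are points and $M$ is totally disconnected, and a nonempty compact perfect totally disconnected set is a Cantor set.

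For part (2), assume $int(M)\neq\emptyset$ and write $\mathcal I=int(M)$. The key step is that $\mathcal I$ is forward invariant: since each $f_i$ is open, $f_i(\mathcal I)$ is an open subset of $M$, hence $f_i(\mathcal I)\subset int(M)=\mathcal I$. Therefore $\overline{\mathcal I}$ is closed, nonempty and forward invariant, so minimality forces $M=\overline{\mathcal I}$; that is, $M$ is regular closed and $\partial M=\partial\mathcal I$, whence $K\cup N=\partial M=\partial\mathcal I$. Now I split cases: if $\partial\mathcal I=\emptyset$ then $\mathcal I$ is clopen and nonempty in a connected space, so $\mathcal I=S^1$ and $M=S^1$; otherwise $\partial\mathcal I=K\cup N\neq\emptyset$, and I distinguish by which pieces survive. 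If $N=\emptyset$ we get $M=\mathcal I\cup K$; if $K=\emptyset$ we get $M=\mathcal I\cup N$; and if both are nonempty then, because $N\subset\partial M=\partial\mathcal I$, we automatically have $\partial\mathcal I\cap N=N\neq\emptyset$, which is exactly the remaining listed case. This exhausts the possibilities.

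The main obstacle is the forward-invariance of $M'$ in part (1): it is the only place where the hypothesis ``open'' does real work beyond guaranteeing that $f_i(\mathcal I)$ is open, and it must be justified via local injectivity, so that a nontrivial convergent sequence in $M$ cannot be collapsed onto an isolated point. Once this monotonicity (light-map) fact is in hand, both parts follow purely from the ``no proper closed invariant subset'' characterization of minimality together with elementary topology (regular-closedness from invariance of the interior, and the classification perfect $+$ nowhere dense $\Rightarrow$ Cantor set). The closed-interval case is handled identically, with the only care needed at the endpoints of $I$.
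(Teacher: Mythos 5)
Your proof takes a genuinely different route from the paper's, and most of it is correct and in places stronger. For part (1), where the paper goes through Lemma \ref{l1}, the maximal-Cantor decomposition, and a separate exclusion of the case $\mathcal{I}=\emptyset$, $K\neq\emptyset$, $N\neq\emptyset$, you argue directly that $M'$ is forward invariant (justifying via local injectivity the invariance that the paper merely asserts) and conclude with perfect $+$ empty interior $\Rightarrow$ Cantor; this is clean and complete. For part (2), your observation that $f_i(\mathcal{I})\subset int(M)$ forces $M=\overline{\mathcal{I}}$ by minimality is correct, and it is a structural fact the paper never isolates (it appears only implicitly, in the proof of Proposition \ref{cantorvals2}); it immediately gives $M\setminus\mathcal{I}=\overline{\mathcal{I}}\setminus\mathcal{I}$ and your four-way case split. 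The paper instead proves the theorem by excluding three impossible configurations one by one and never establishes $M=\overline{\mathcal{I}}$ explicitly.

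There is, however, one genuine gap, and it concerns what $\partial\mathcal{I}$ means. You read $\partial\mathcal{I}$ as the topological boundary $\overline{\mathcal{I}}\setminus\mathcal{I}$; under that reading your conclusion ``$N\subset\partial\mathcal{I}$, hence $\partial\mathcal{I}\cap N=N\neq\emptyset$'' is automatic from $M=\overline{\mathcal{I}}$ --- but it also renders the condition in the theorem's second bullet vacuous (it would merely say all three pieces are nonempty). The paper means by $\partial\mathcal{I}$ the union of the endpoints of the connected components of $\mathcal{I}$: that is the only reading under which Example 3 (``$\partial\mathcal{I}=N$'') and Example 4 (``$N\nsubseteq\partial\mathcal{I}$'', where the point $b$ is a limit of infinitely many components but an endpoint of none) make sense, and under which the paper's excluded case (3) has content. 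Under that reading your argument does not prove the second bullet: from $N\subset\overline{\mathcal{I}}\setminus\mathcal{I}$ it does not follow that some \emph{endpoint} of a component lies in $N$. The paper proves exactly this missing statement: if all component endpoints lay in $K$, then $\mathcal{I}\cup K$ is closed, $N$ sits in its open complement, minimality forces the orbit of a point of $K$ to enter a complementary component $T$ meeting $N$, and openness (local injectivity) then places uncountably many points of $M$ inside $T$, contradicting that $M\cap T\subset N$ is countable. Note that your stronger result yields a short patch: if every component endpoint were in $K$, then, since disjoint components have diameters tending to $0$, every point of $\overline{\mathcal{I}}\setminus\mathcal{I}$ is either an endpoint or a limit of endpoints, hence lies in $K$; combined with $M=\overline{\mathcal{I}}$ this forces $N=\emptyset$. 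But as written this step --- the only place where the component structure of $\mathcal{I}$, rather than its topological boundary, matters --- is absent from your proof.
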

From this theorem we have that unique cases for $ \mathcal {I}, K $ and $ N $ that are not possible are the following:
%\begin{theorem}
%\label{theor1}
%Let $f_1,...,f_n$ be open functions, $f_i:I\to I$ where $I$ is a closed interval or $S^{1}$ and $M=\mathcal{I}\cup K\cup N$ is minimal set for the family $IFS(f_1,...,f_n)$. Then unique cases for $\mathcal{I},  K$ and $N$ that are not possible are the following:
\begin{enumerate}

 \item $K= \mathcal{I}=\emptyset$ and $N$ is an infinite set.

 \item $\mathcal{I}=\emptyset$, $K\neq\emptyset$ and $N\neq\emptyset$.

  \item $N\neq\emptyset$,   $K\neq \emptyset $ and  $\partial \mathcal{I}\cap N=\emptyset$.

\end{enumerate}

Note that a particular case of $M= \mathcal{I}\cup N$, is realized when $M$ is an union of disjoint intervals (finite or infinite).

We will exhibit examples (see  section \ref{examples}) to show that any case of  Theorem \ref{theor} can be realized and we will prove that the remaining cases are not possible.

%Cuando $M= \mathcal{I}\cup K$ (see example 7), se cumple que $M$ es un  symmetric Cantorval ( see definition \ref{cantdef}) y en la Proposition \ref{cantorvals2} probaremos que es el \'unico caso en que $M$ es un symmetric Cantorvals.

%We will show that is possible that a minimal set is a union of an interval and a Cantor set. It is a big difference with a minimal set in the case of a group action.

When $M= \mathcal{I}\cup K$ (see example 7), then $M$ is a  symmetric Cantorval (see definition \ref{cantdef}). This sets were studied in \cite{GN} and \cite{MO} and we use definition given by Z. Nitecki (see \cite{Ni}). We prove in Proposition \ref{cantorvals2} that this is the unique case where  $M$ is a symmetric Cantorval.

\medskip

Examples given by Denjoy show that it is possible that a Cantor set is a minimal set for a diffeomorphism. Then, it is a natural question if any Cantor set is minimal for a diffeomorphism. As any two Cantor sets  are "$C^{0}$-homeomorphic" then by conjugating a Denjoy's example, it is easy to prove that any Cantor set is a minimal set for a $C^{0}$-homeomophism. A homeomorphism of class $C^{2}$ can not have a Cantor minimal  set. Herman \cite{H}, McDuff \cite{mc}, Kercheval \cite{N}, Kra \cite{KS} and Portela \cite{P} proved that there exist Cantor sets ( for example the usual triadic set) such that are not minimal sets for  $C^{1}$  diffeomorphism. We will see in Subsection \ref{subsec1}(example (1)) ,  that the  usual triadic set is minimal for $IFS(f,g)$ where $f$ y $g$ are  $C^{\infty}$, then it is natural to ask if  any Cantor set is  minimal for an $ IFS$ where any element is of class  $C^{1}$?\\
A more general question is the following:

Given a closed set $M=\mathcal{I}\cup K\cup N$ different of (1), (2) and (3). Are there open functions  $f_1,...,f_n$, $f_i:S^{1}\to S^{1}$, such that $M$
  is a minimal set for the family $IFS(f_1,...,f_n)$?

  In \cite{s1} K. Shinohara exhibited an example of an $IFS(f,g)$, where $f,g:I \rightarrow I$ are $C^1$-close to identity, with minimal set different of $I$. A natural question is in which of the possibilities of Theorem 1 is the minimal set of Shinohara's example.

\section{Topological Classification of closed sets.}

In this section we will give a topological description of closed sets and we will prove  lemmas that will be used in the proof of the main Theorem

Recall that a set is called perfect if it is closed and any point is an accumulate one.
Note that any perfect set is uncountable.
Given a set  $M\subset S^{1}$, we denote  $M^{'}$ the set of accumulate points.

\begin{lemma}\label{l5}
  Let $M\subset S^{1}$, where $M$ is closed,  totally disconnected and uncountable, then there exists $K\subset M$, $K$ a maximal Cantor set such that $M\setminus K$ is countable.

\end{lemma}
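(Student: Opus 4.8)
The plan is to use the Cantor–Bendixson derivative to extract the "condensed" part of $M$ and identify it as the desired maximal Cantor set. Let me sketch this.

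First, I would recall the Cantor–Bendixson analysis. Define $M'$ to be the set of accumulation (limit) points of $M$ as in the excerpt. The key structural fact is that any closed subset of $S^1$ (or of $\R$) can be decomposed as the disjoint union of a perfect set $P$ and a countable set $C$; this is the Cantor–Bendixson theorem, which follows because the perfect kernel $P$ (obtained by transfinitely iterating the derivative operation until it stabilizes) is perfect, and the complement $M \setminus P$ is open in $M$ and second countable, hence countable (each point removed at a successor stage is isolated in a closed set, and one uses the countable base of $S^1$ to bound the number of stages and points removed).

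Next, I would set $K := P$, the perfect kernel of $M$. Since $M$ is totally disconnected and $K \subset M$, the set $K$ is also totally disconnected; being perfect (closed with no isolated points) and totally disconnected, $K$ is either empty or a Cantor set by the standard topological characterization of the Cantor set. The hypothesis that $M$ is uncountable forces $K \neq \emptyset$: indeed if the perfect kernel were empty then $M = M \setminus K$ would be countable, contradicting uncountability. Hence $K$ is a genuine Cantor set, and by construction $M \setminus K$ is countable, which is exactly what the statement requires.

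The remaining point is \emph{maximality} of $K$. Here I would argue that the perfect kernel is the largest perfect subset of $M$: any perfect set $Q \subseteq M$ satisfies $Q = Q'$, so $Q$ survives every iterate of the derivative and therefore $Q \subseteq P = K$. Since every Cantor set is in particular perfect, no Cantor subset of $M$ can properly contain $K$, giving maximality. The main obstacle I anticipate is being careful about the transfinite iteration and the counting argument showing $M \setminus K$ is countable — one must invoke second countability of $S^1$ to guarantee that the derivative stabilizes at a countable ordinal and that only countably many points are discarded. Everything else (that a nonempty perfect totally disconnected compact metric set is a Cantor set, and that perfect sets are contained in the kernel) is standard topology, so the technical heart is simply the Cantor–Bendixson bookkeeping.
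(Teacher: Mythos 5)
Your proof is correct, but it reaches the decomposition by different machinery than the paper. You invoke the Cantor--Bendixson theorem in its transfinite form: iterate the derivative $M\mapsto M'$ through the ordinals, take the perfect kernel $P$, and control the discarded points via second countability. The paper instead constructs $K$ in one step, with no ordinals at all: it defines $K$ as the set of condensation points of $M$, namely $K=\{x\in M:\ B(x,\varepsilon)\cap M \mbox{ is uncountable for every } \varepsilon>0\}$, shows directly that $M\setminus K$ is countable by covering it with countably many rational-endpoint intervals each meeting $M$ in a countable set, and deduces perfectness of $K$ from that countability. The two constructions produce the same set (for a closed subset of $S^1$ the perfect kernel is exactly the set of condensation points lying in $M$), but the paper's version is self-contained and elementary, while yours outsources the bookkeeping to a standard theorem whose transfinite counting argument you would still need to carry out carefully if asked. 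The maximality arguments also differ, slightly in your favor: the paper argues by contradiction that a Cantor set $K_1\supsetneq K$ would make $K_1\setminus K$, and hence $M\setminus K$, uncountable; you observe that every perfect subset $Q\subseteq M$ satisfies $Q=Q'$ and therefore survives every derivative, so $Q\subseteq P=K$, which shows $K$ is not merely maximal but is the maximum Cantor subset of $M$, containing every other one.
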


\begin{proof}
 Let $B(x,\varepsilon )$ the ball of center $x$ and  radius $\varepsilon >0$. Let  $$K=\{x\in M : \ \forall \varepsilon >0 \mbox{ it holds that }  B(x,\varepsilon)\cap M \mbox{  is uncountable } \}.$$
It is easy to see that $K$ is closed and  totally disconnected since  $M$ is. To prove that $K$ is a Cantor we have to prove that $K^{'}=K$. For that, it is enough to show that $M\setminus K$ is countable. Let $y\in M\setminus K$ then there exists $\varepsilon>0$ such that $B(y,\varepsilon)\cap M $ is countable. Let $U_y$ be an open interval with  rational end points and $y\in U_y\subset B(y,\varepsilon)$ then $$M\setminus K=\bigcup_ {y\in M\setminus K}U_y\cap M.$$
Since the number of $U_y$ is countable and $U_y\cap M$ is countable, it follows that  $M\setminus K$ is countable.
$K$ is maximal since if there exists a Cantor set  $K_1$, with $K\subsetneq K_1$ then $K_1\setminus K$ is uncountable, therefore $M\setminus K$ is uncountable, which is a contradiction.
\end{proof}

\begin{lemma}\label{principal}
Let $M\subset S^{1}$ be a closed set
then $M$ can be written in a unique way as a disjoint union of  $\mathcal{I}$, $K$ and $N$ where
$\mathcal{I}$ is the union of the connected components of $int (M)$, $K$ is a maximal
Cantor set in $M\setminus \mathcal{I}$ and $N$ is a countable set.
\end{lemma}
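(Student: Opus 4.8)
The plan is to reduce everything to the closed, totally disconnected set $M\setminus int(M)$ and then invoke Lemma \ref{l5}. First I would set $\mathcal{I}:=int(M)$ (the union of the connected components of the interior) and put $A:=M\setminus\mathcal{I}=M\setminus int(M)$. Since $M$ is closed and $int(M)$ is open, $A$ is closed. I claim $A$ is totally disconnected. Indeed, $A$ has empty interior in $S^{1}$: an interior point $x$ of $A$ would admit an open arc $U\ni x$ with $U\subset A\subset M$, forcing $U\subset int(M)$ and contradicting $U\cap int(M)=\emptyset$. Now any connected subset of $S^{1}$ with at least two points is a nondegenerate arc and hence contains points that are interior to it in $S^{1}$; were such an arc contained in $A\subset M$, those points would lie in $int(M)$, again a contradiction. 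Thus the connected components of $A$ are singletons, i.e. $A$ is totally disconnected.

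For existence I would split according to the cardinality of $A$. If $A$ is countable, take $K=\emptyset$ and $N=A$; then $M=\mathcal{I}\cup K\cup N$ is the desired disjoint decomposition. If $A$ is uncountable, then $A$ is closed, totally disconnected and uncountable, so Lemma \ref{l5} applies to $A$ and yields a maximal Cantor set $K\subset A$ with $A\setminus K$ countable; setting $N:=A\setminus K$ gives $M=\mathcal{I}\cup K\cup N$, a disjoint union with $K$ a maximal Cantor set in $M\setminus\mathcal{I}$ and $N$ countable.

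The main work is uniqueness, where I would give an intrinsic description of the three pieces. The set $\mathcal{I}$ is forced, being literally $int(M)$, hence so is $A=M\setminus\mathcal{I}$. The key point is that the maximal Cantor set is canonically the set of condensation points
\[
K=\{x\in A:\ B(x,\varepsilon)\cap A\ \mbox{is uncountable for every }\varepsilon>0\},
\]
exactly as constructed in Lemma \ref{l5}. Suppose $M=\mathcal{I}\cup K'\cup N'$ is another admissible decomposition, so $A=K'\cup N'$ with $K'$ a maximal Cantor set and $N'$ countable. Every nonempty relatively open subset of the perfect set $K'$ is uncountable, so each point of $K'$ is a condensation point of $A$, whence $K'\subseteq K$. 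Conversely, since $N'$ is countable, a neighborhood of a point of $A$ contains uncountably many points of $A=K'\cup N'$ if and only if it contains uncountably many points of $K'$; thus the condensation points of $A$ coincide with those of $K'$, and as $K'$ is closed these lie in $K'$. Hence $K\subseteq K'$, so $K=K'$ and consequently $N=A\setminus K=A\setminus K'=N'$.

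I expect the delicate point to be precisely this identification of the maximal Cantor set with the condensation set independently of the choices made inside Lemma \ref{l5}; once $K$ is pinned down intrinsically as the condensation points of $A$, uniqueness of $\mathcal{I}$, $K$ and $N$ follows at once.
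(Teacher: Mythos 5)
Your proof is correct, and its existence half is essentially the paper's argument: both reduce to the closed set $A=M\setminus int(M)$, split according to whether $A$ is countable, and invoke Lemma \ref{l5} in the uncountable case. You additionally spell out why $A$ is totally disconnected (a closed subset of $S^{1}$ with empty interior can contain no nondegenerate arc), which is a hypothesis of Lemma \ref{l5} that the paper passes over silently when it reduces to the ``previous case.'' The genuine divergence is in uniqueness. The paper dispatches it in one sentence --- ``a consequence of the definition of $\mathcal{I}$ and because $K$ is maximal in $M\setminus \mathcal{I}$'' --- the implicit argument presumably being that the union of two Cantor subsets of the totally disconnected set $A$ is again a Cantor set, so two maximal ones must coincide. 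You instead pin $K$ down intrinsically as the set of condensation points of $A$, and prove that any Cantor set $K'\subset A$ with countable complement in $A$ equals this set (every point of a Cantor set is a condensation point of it, since nonempty relatively open subsets of a perfect compact set are uncountable; conversely, countability of $N'$ plus closedness of $K'$ forces every condensation point of $A$ into $K'$). This buys something the maximality argument does not: it shows that maximality of $K'$ need not be assumed at all --- it is automatic once the complement is required to be countable --- and it ties the canonical $K$ directly to the construction inside Lemma \ref{l5}. Both routes are sound; yours is more self-contained precisely where the paper is terse.
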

\begin{proof}
 In the case that $int(M)=\emptyset$ we have two possibilities: If  $M$ is countable, the statement is obvious; if $M$ is uncountable, by Lemma \ref{l5}, we have that $M=K\cup N$.\\
In the case that $int(M)\neq\emptyset$, let $\mathcal{I}$ be the  union of the interior of the non-trivial connected components of $M$. Then $M\setminus \mathcal{I}$ is closed and  $int (M\setminus \mathcal{I})=\emptyset$. Therefore by the previous case we have that $M=\mathcal{I}\cup K\cup N$, so we have showed the existence.
The unicity is a consequence of the definition of $\mathcal{I}$ and because $K$ is maximal in  $M\setminus \mathcal{I}$.
\end{proof}

\begin{rk}\label{obs1}

  Since $M$ is closed it follows that $\overline{\mathcal{I}}\subset M$ then $\partial \mathcal{I}\subset K\cup N$.

\end{rk}

\medskip
 From now on, we consider $IFS(f_1,...,f_n)$ where $f_i:I\to I$ are open function, $I$ is an open interval or $S^{1}$ and $M$ is a minimal set for the  family.

Next lemma is a consequence of the fact that any perfect set is uncountable.

\begin{lemma}\label{l1}
  Let $M$ be a minimal set with $\sharp M=\infty$. Then $M$ is uncountable.
\end{lemma}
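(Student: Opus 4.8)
The plan is to show that $M$ is a perfect set and then invoke the fact, recalled just above, that every perfect set is uncountable. Since a minimal set is by definition closed, we have $M' \subset M$ automatically (here $M'$ denotes the accumulation points of $M$), so it suffices to prove the reverse inclusion $M \subset M'$, i.e. that $M$ has no isolated points. I would obtain this by showing that $M'$ is a non-empty, closed, invariant subset of $M$ and then appealing to minimality.

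First, $M'$ is closed, which is a general topological fact. Next, $M'$ is non-empty: since $M$ is infinite and sits inside the compact circle $S^1$, Bolzano--Weierstrass provides an accumulation point of $M$, which belongs to $M$ because $M$ is closed; hence $M' \neq \emptyset$.

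The key step, which I expect to be the main obstacle, is the invariance $f_i(M') \subset M'$ for each $i$, and this is exactly where the hypothesis that the $f_i$ are open is used. A continuous open map of an interval (or of $S^1$) can have no local extremum at an interior point, for otherwise the image of a small neighbourhood of such a point would lie on one side of the extreme value and so fail to be open; and a continuous map on an interval without interior local extrema is strictly monotone, hence locally injective. Granting local injectivity, take $y \in M'$ and a neighbourhood $W$ of $y$ on which $f_i$ is injective. Since $y \in M'$ there is a sequence $a_k \to y$ with $a_k \in W \cap M$ and $a_k \neq y$; then $f_i(a_k) \in M$ by invariance of $M$, $f_i(a_k) \to f_i(y)$ by continuity, and $f_i(a_k) \neq f_i(y)$ by injectivity on $W$. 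Thus $f_i(y)$ is an accumulation point of $M$, i.e. $f_i(y) \in M'$, which gives invariance.

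Finally I would close the argument using minimality. Since $M'$ is invariant, for any $x \in M'$ we have $O(x) \subset M'$, and therefore $\overline{O(x)} \subset M'$ because $M'$ is closed. But $x \in M$ and $M$ is minimal, so $\overline{O(x)} = M$; hence $M \subset M'$ and thus $M = M'$. Consequently $M$ is perfect, and by the quoted fact it is uncountable. The only delicate point in the whole argument is the passage from \emph{open} to \emph{locally injective} via the absence of interior local extrema; everything else is routine.
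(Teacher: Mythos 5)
Your proof is correct and follows essentially the same route as the paper's: establish $M'\neq\emptyset$ by compactness, show $f_i(M')\subset M'$ using openness of the $f_i$, and conclude $M=M'$ by minimality, so that $M$ is perfect and hence uncountable. The only difference is that you spell out the open-implies-locally-injective argument behind the invariance of $M'$, a step the paper asserts without justification.
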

\begin{proof}
   If $M^{'}=\emptyset$ then $M$ is finite. Therefore  $M^{'}\neq\emptyset$. Since $f_i:I\to I$ are open functions then $f_i(M^{'})\subset M^{'}$. Therefore, $M=M^{'}$ by minimality of $M$, so  $M$ is a perfect set and it follows that it is uncountable.
\end{proof}

Then, the case $\mathcal{I}= K=\emptyset$ and $N$ with infinite many points is not possible.

\begin{lemma}\label{l2}
  Suppose that a minimal set has finitely many non-trivial connected  components, then the minimal set is such a union.

\end{lemma}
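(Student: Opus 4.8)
The plan is to bundle the finitely many non-trivial components into a single set and then show that this set is closed and invariant, so that minimality of $M$ forces it to coincide with $M$.

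First I would name the non-trivial connected components $J_1,\dots,J_k$ and set $U=\bigcup_{i=1}^{k}J_i$. Since each $J_i$ is a connected component of the closed set $M$, it is closed in $M$ and hence closed in $S^{1}$; being a non-trivial connected subset of $S^{1}$, it is a closed arc. A finite union of closed sets is closed, so $U$ is closed, and it is a non-empty subset of $M$ (we are in the case $int(M)\neq\emptyset$, so at least one $J_i$ exists).

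The crucial step is to verify invariance, namely $f_j(U)\subset U$ for every generator $f_j$, and this is exactly where openness is used. Fix a component $J_i$. Since $M$ is invariant we have $f_j(J_i)\subset M$, and as the continuous image of a connected set, $f_j(J_i)$ is connected. Because $J_i$ has non-empty interior and $f_j$ is open, $f_j(int(J_i))$ is a non-empty open subset of $M$, so $f_j(J_i)$ is a non-trivial connected subset of $M$. A non-trivial connected subset of $M$ must lie inside a single connected component of $M$, and that component is itself non-trivial, hence equals some $J_\ell$. Therefore $f_j(J_i)\subset J_\ell\subset U$, and taking the union over $i$ yields $f_j(U)\subset U$.

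Finally, $U$ is a non-empty closed invariant subset contained in the minimal set $M$, so by minimality $U=M$; that is, $M$ is precisely the union of its non-trivial connected components. I expect the only delicate point to be the openness argument guaranteeing that the image of a non-trivial component stays non-trivial, so that it can neither collapse onto a point nor spill across several components and thereby escape $U$; the verification that connected components of a closed set are themselves closed is routine.
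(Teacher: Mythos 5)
Your proof is correct and follows essentially the same route as the paper: the paper also observes that openness of the $f_i$ forces the image of each non-trivial component to be a non-trivial interval inside $M$, hence contained in the (finite) union of non-trivial components, and then concludes by minimality. You merely make explicit the details the paper leaves implicit (components of a closed set are closed, the image cannot collapse to a point, and the invariance-plus-minimality argument), so there is no substantive difference.
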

\begin{proof}
 Let $I_1,...,I_n$  be the non-trivial connected  components of $M$. Since any $f_i$ is open, then $f_i(I_j)$ is an interval for any $i$ and $j$. Therefore, $f_i(I_j)\subset \cup I_k$. It follows that $M=\cup I_k.$

\end{proof}
Let us introduce the following

\begin{definition}
\label{cantdef}
A symmetric Cantorval is a nonempty compact subset $M$ of $S^1$ (or the
real line) such that:\\
(1) $M$ is the closure of its interior (i.e., the nontrivial components are dense)\\
(2) Both endpoints of any nontrivial component of $M$ are accumulation points
of trivial (i.e., one-point) components of $M$.
\end{definition}
\begin{rk}\label{obs2}
If $M= \mathcal{I}\cup K\cup N$ is a  symmetric Cantorval, then for all $z\in K\cup N$ there exists $\{x_n\}\subset K\cup N$, $x_n\neq z$ such that $x_n\to z$.
\end{rk}

Example 7 of section \ref{examples} shows that a  minimal  set can be a symmetric Cantorval.

\begin{lemma}\label{cantorvals}
Let $M= \mathcal{I}\cup K\cup N$. If $M$ is a  symmetric Cantorval then $N=\emptyset$.
\begin{proof}

Suppose that $N\neq\emptyset$. Let $x_0\in N$ and $\varepsilon >0$ be such that $\overline{B(x_0, \varepsilon )}\cap K=\emptyset$. We will  prove that $\overline{B(x_0, \varepsilon )}\cap N$ is a perfect set. It is obvious that it is closed. Given $z\in \overline{B(x_0, \varepsilon )}\cap N$ as $M$ is a  symmetric Cantorval , by Remark \ref{obs2}, there exist $\{x_n\}\subset (K\cup N)\cap\overline{B(x_0, \varepsilon )} $, $x_n\neq z$ such that $x_n\to z$.
As $\overline{B(x_0, \varepsilon )}\cap K=\emptyset$ then $x_n\in N$, so $\overline{B(x_0, \varepsilon )}\cap N$ is a perfect set. Therefore it is  uncountable and this is a contradiction.
\end{proof}

\end{lemma}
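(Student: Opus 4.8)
The plan is to argue by contradiction: assuming $N \neq \emptyset$, I would exhibit an uncountable subset of $N$, which is impossible since $N$ is countable. The natural candidate is a suitably localized piece of $N$ that turns out to be a nonempty perfect set, since any nonempty perfect set is uncountable.

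First I would fix a point $x_0 \in N$. Because $K$ is closed and disjoint from $N$, I can choose $\varepsilon > 0$ small enough that $\overline{B(x_0,\varepsilon)} \cap K = \emptyset$. Moreover, since $N$ is countable, only countably many radii $r$ satisfy $\partial B(x_0,r) \cap N \neq \emptyset$, so I may further shrink $\varepsilon$ so that no point of $N$ lies on the bounding sphere $\partial B(x_0,\varepsilon)$. The object to study is then $P := \overline{B(x_0,\varepsilon)} \cap N$, which contains $x_0$ and hence is nonempty.

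The heart of the argument is to show that $P$ is perfect. For closedness, I would take a sequence in $P$ converging to some $z$; then $z \in \overline{B(x_0,\varepsilon)} \cap M$ since both are closed, so $z \notin K$ by the choice of $\varepsilon$, and $z \notin \mathcal{I}$ because the points of $N$ lie outside $int(M)$ and therefore cannot accumulate at an interior point of $M$. Hence $z \in N$, giving $z \in P$. For the absence of isolated points, I would invoke Remark \ref{obs2}: given $z \in P \subset K \cup N$, there is a sequence $\{x_m\} \subset K \cup N$ with $x_m \neq z$ and $x_m \to z$. Since $z$ lies in the open ball (this is where the choice of $\varepsilon$ avoiding boundary points of $N$ enters), the $x_m$ eventually lie in $\overline{B(x_0,\varepsilon)}$, and since this closed ball misses $K$, the $x_m$ eventually lie in $N$, hence in $P$. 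Thus $z$ is a limit of points of $P \setminus \{z\}$.

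The main obstacle is the closedness step, precisely the need to rule out that a limit of $N$-points falls into $\mathcal{I}$; this is exactly where the decomposition $M = \mathcal{I} \cup K \cup N$ with $N \cap int(M) = \emptyset$ is essential, and it is the reason one must argue rather than simply intersect closed sets. Once $P$ is established as a nonempty perfect set, it is uncountable, contradicting the countability of $N$. Therefore $N = \emptyset$.
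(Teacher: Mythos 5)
Your proof is correct and follows essentially the same route as the paper: localize at a point $x_0\in N$ inside a closed ball missing $K$, show that $\overline{B(x_0,\varepsilon)}\cap N$ is a nonempty perfect set via Remark \ref{obs2}, and contradict the countability of $N$. You are in fact slightly more careful than the paper's own proof, which asserts the approximating sequence can be taken inside $\overline{B(x_0,\varepsilon)}$ (unjustified if $z$ lies on the boundary of the ball) and calls closedness obvious; your shrinking of $\varepsilon$ so that $\partial B(x_0,\varepsilon)\cap N=\emptyset$, and your explicit argument that limits of $N$-points cannot fall into $\mathcal{I}$, patch exactly these two points.
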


\begin{prop}\label{cantorvals2}
Let $IFS(f_1,...,f_n)$ where $f_i:I\to I$ are open function, $I$ is an open interval or $S^{1}$ and $M$ is a minimal set for the  family. Then\\

$M$ is a  symmetric Cantorval if and only if $M=\mathcal{I}\cup K$.
\end{prop}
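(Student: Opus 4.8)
The plan is to prove the two implications separately, noting that one of them is essentially already available. For the forward implication, if $M$ is a symmetric Cantorval then Lemma \ref{cantorvals} gives $N=\emptyset$ at once, so $M=\mathcal{I}\cup K$ and nothing further is required. Throughout the converse I would read $M=\mathcal{I}\cup K$ as the genuine case in which both $\mathcal{I}\neq\emptyset$ and $K\neq\emptyset$: if $\mathcal{I}=\emptyset$ then $M=K$ is a Cantor set with empty interior, and if $K=\emptyset$ then $M=\mathcal{I}$ is clopen and hence equals $S^1$, so both are excluded from (or are degenerate instances of) Definition \ref{cantdef}.

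For the converse, assume $N=\emptyset$, so $M=\mathcal{I}\cup K$, and verify conditions (1) and (2) of Definition \ref{cantdef}. Condition (1), that $M$ is the closure of its interior, I would deduce from minimality. The key observation is that $\mathcal{I}=int(M)$ is itself invariant: since each $f_i$ is open, $f_i(\mathcal{I})$ is open, and since $M$ is invariant $f_i(\mathcal{I})\subset M$, whence $f_i(\mathcal{I})\subset int(M)=\mathcal{I}$. By continuity this forces $f_i(\overline{\mathcal{I}})\subset\overline{\mathcal{I}}$, so $\overline{\mathcal{I}}$ is a nonempty closed invariant subset of $M$; minimality then yields $\overline{\mathcal{I}}=M$, which is precisely condition (1).

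It remains to verify condition (2), and this is where the real work lies. First I would record the structure of $M$: the nontrivial components are exactly the closures of the components of $\mathcal{I}$, so their endpoints lie in $\partial\mathcal{I}\subset K$ by Remark \ref{obs1} (using $N=\emptyset$), and since disjoint open intervals in $S^1$ are at most countably many, the set $E$ of all such endpoints is countable. A point of $K$ cannot be interior to a component (that would put it in $int(M)=\mathcal{I}$), so each point of $K$ is either an endpoint or a one-point (trivial) component; hence the trivial components fill out $T:=K\setminus E$. The decisive point is that every $a\in E$ is an accumulation point of $T$: since $K$ is a Cantor set, every neighborhood of $a$ meets $K$ in an uncountable set, while $E$ is only countable, so each such neighborhood must contain points of $T$, producing points of $T$ (all distinct from $a\in E$) converging to $a$. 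This gives condition (2) and completes the converse. The step I expect to be most delicate is this last one, namely separating the countable endpoint set $E$ from the uncountable residual part of $K$, since it is exactly what rules out an endpoint being approached only by other endpoints.
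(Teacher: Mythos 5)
Your proof is correct and follows essentially the same route as the paper: the forward implication via Lemma \ref{cantorvals}, and the converse by verifying conditions (1) and (2) of Definition \ref{cantdef} through minimality and the Cantor structure of $K$. The differences are refinements rather than a new approach: your proof of condition (1) (invariance of $\overline{\mathcal{I}}$ under the open maps $f_i$, then minimality) is an equivalent variant of the paper's orbit-density argument, while your countable-endpoint-set versus uncountable-neighborhoods-in-$K$ argument for condition (2) supplies in full the step that the paper compresses into the single assertion that $\partial\mathcal{I}\subset K$ and $K$ is a Cantor set.
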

\begin{proof}
($\Rightarrow$) If $M= \mathcal{I}\cup K\cup N$, by Lemma \ref{cantorvals} we have that $N=\emptyset$.\\
($\Leftarrow$)  Given $z\in M$ and $I_0\in \mathcal{I}$ as $M$ is a minimal set there exist $\{ \phi_n \}\subset IFS(f_1,...,f_n)$ such that $\phi_n(I_0)\to z$. Then $M$ is the closure of its interior.
As $\partial  \mathcal{I}\subset K$ and $K$ is a Cantor set then both endpoints of any nontrivial component of $M$ are accumulation points of trivial components of $M$.
\end{proof}

\section{First part of the Proof of Theorem \ref{theor}}

This proof has 2 steps. In the first one we will prove that the following cases are not possible. In the second one we will construct examples of the remaining cases. These examples will be shown in Section \ref{examples}.

As a  consequence of  the previous lemmas, the following cases can not occur:
\begin{itemize}

 \item $K= \mathcal{I}=\emptyset$ and  $N$ with infinite many points. Because of  $M$ would be countable and this fact is not possible by Lemma \ref{l1}.

 \item $\mathcal{I}=\emptyset$ and $K\neq\emptyset$, $N\neq\emptyset$.
  Since $\mathcal{I}=\emptyset$, any  $f_i$ is open and $K$ is a maximal Cantor set, then $f_i (K)\subset K$ for any $i$. So, for any  $\phi$ in IFS we have that $\phi (K)\subset K$. Therefore $M\subset K$, which is a contradiction.

  \item $M=K\cup \mathcal{I}\cup N$ where $\partial \mathcal{I}\subset K$. Since $\partial \mathcal{I}\subset K$ then $K\cup \mathcal{I}$ is closed, so $S^{1}\setminus( K\cup \mathcal{I})$ is open and $N\subset S^{1}\setminus( K\cup \mathcal{I})$. Let $T$ be a connected  component of $S^{1}\setminus( K\cup \mathcal{I})$ with $N\cap T\neq \emptyset$. Notice that $N\cap T$ is countable.
      On the other hand, for $x\in K$ there exists $\phi$ in IFS such that $\phi(x)\in T$. Then $\phi (K)\cap  T$ is uncountable, which is a contradiction.

\end{itemize}

\section{Tools for example's construction.}

\begin{figure}[h]
\psfrag{i0}{$I_0$}\psfrag{i1}{$I_1$}
\psfrag{i-1}{$I_{-1}$}
\psfrag{f}{$f$}
\psfrag{g}{$g$}
\psfrag{i}{$\widehat{I}$}
\psfrag{iii}{$I^{'}$}
\psfrag{t+}{$T^{+}_{I^{'}I}$}\psfrag{t-}{$T^{-}_{I^{'}I}$}
\psfrag{ii}{$I$}
\psfrag{a}{$a$}
\psfrag{b}{$b$}
\psfrag{2b}{$\frac{(a+b)}{3}$}
\psfrag{2a}{$\frac{2(a+b)}{3}$}\psfrag{i1}{$I_1$}
\psfrag{aaa}{$I_0$}
\begin{center}
\psfrag{z}{$I^{'}$}
\caption{\label{figura1}}

\subfigure[]{\includegraphics[scale=0.14]{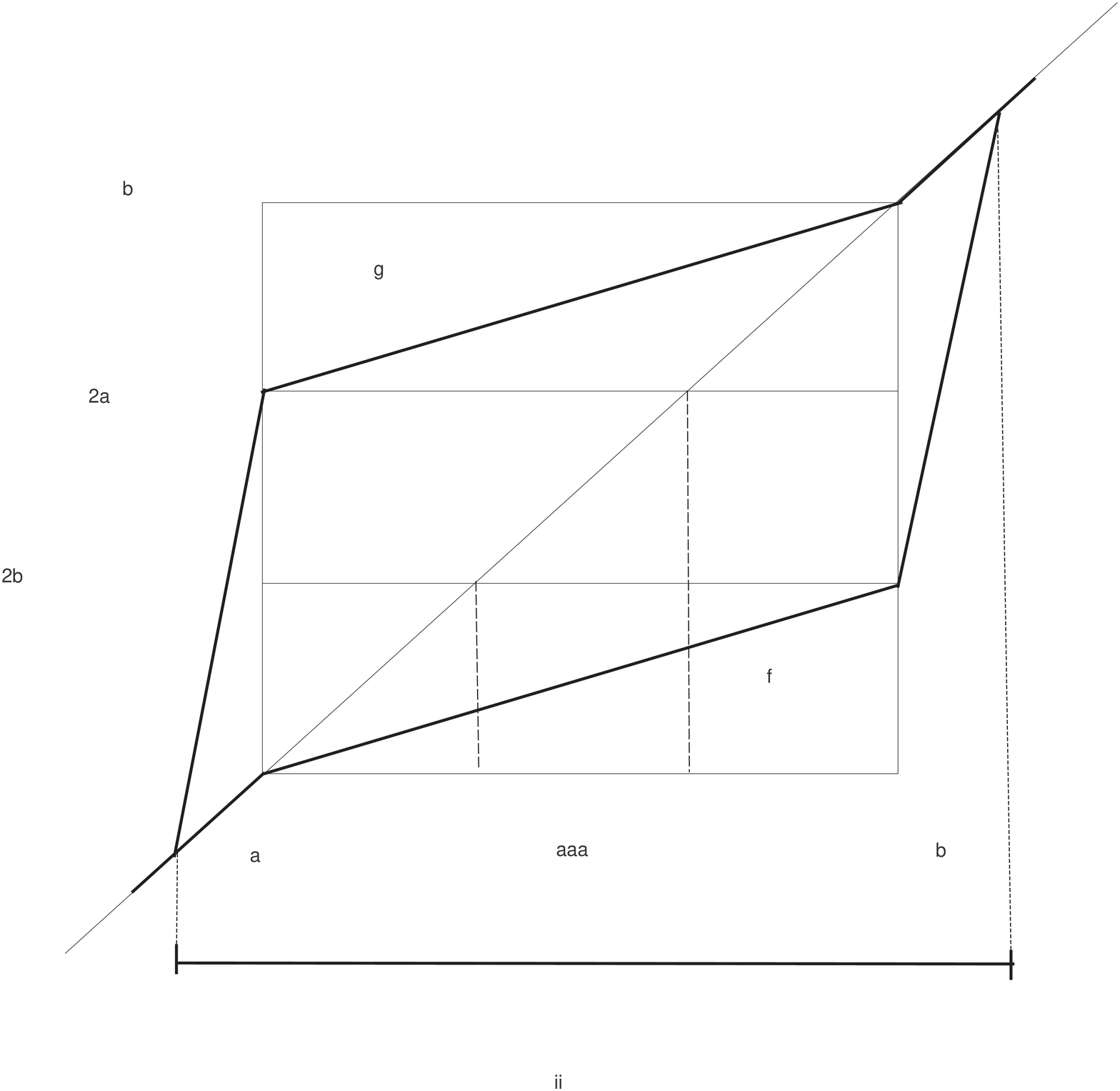}}
\subfigure[]{\includegraphics[scale=0.15]{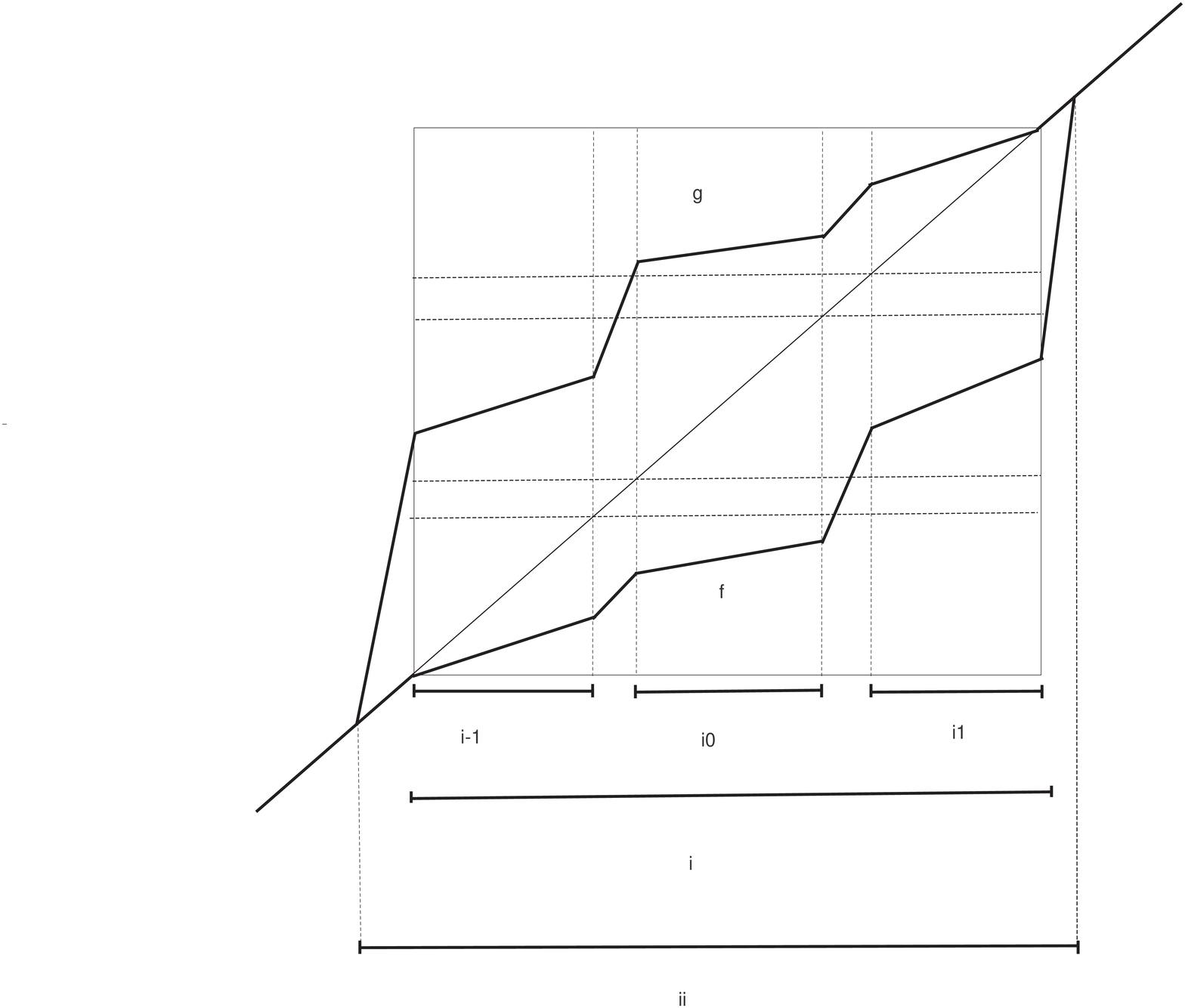}}
\end{center}
%\caption{\small{ \it{ In the figure (a) $C_R\cap [0,1/2]\subset \mathcal{A}$. In the figure (b), $\mathcal{A}=C_R$ }}.}
\end{figure}

We will use the following IFS and functions for the construction of the examples.

\subsection{Examples of IFS with Cantor  minimal set}\label{subsec1}

In this section we will construct IFS with Cantor minimal set.

\begin{enumerate}
 \item

 Given an interval $I\subset S^{1}$, we consider an interval $I^{'}=[a,b]$ with $I^{'}\subsetneq I$. Let  $f$ and $g$ be linear maps of angular coefficient  1/3 in $I^{'}$, as  Figure \ref{figura1}(a) and such that $f|_{I^{c}}=g|_{I^{c}}=Id$.
 If $\Lambda_0=[a,b]$ and $\Lambda_{k+1}=f(\Lambda_k)\cup g(\Lambda_k)$ then $K=\cap \Lambda_k$ is a Cantor set ( the usual triadic set). It is easy to see that $K$ is a minimal set for $IFS(f,g)$.\\
Later we will use the following properties:
\begin{enumerate}
\item $f^{-1}(K)\cap I'\subset K$ and $g^{-1}(K)\cap I'\subset  K$.
\item If $x\in I^{'}$ then $\overline{O(x)}\supset K.$

 \end{enumerate}

  \item Next example is in \cite{s}. Given two intervals $\widehat{I}$ and $I$, with $\widehat{I}\subsetneq I$, we   consider  intervals $I_{-1},I_0$,  $I_1$ and
 the maps $f$ y $g$ as in Figure \ref{figura1} (b). These maps verify that:
\begin{itemize}
 \item $f(I_{-1})\subset I_{-1}$,  $f(I_{0})\subset int( I_{-1})$, $f(I_{1})\subset int(I_{0})$.

  \item $g(I_{1})\subset I_{1}$,  $g(I_{0})\subset int( I_{1})$, $g(I_{-1})\subset int(I_{0})$.

  \item $f^{'}|_{ I_{-1}\cup I_{0}\cup I_{1}  },g^{'}|_{ I_{-1}\cup I_{0}\cup I_{1}  }<\lambda<1.$

  \item $f|_{I^{c}}=g|_{I^{c}}=Id.$

\end{itemize}

 If $\Lambda_0:= I_{-1}\cup I_{0}\cup I_{1}$ and $\Lambda_{k+1}:=f(\Lambda_k)\cup g(\Lambda_k)$ then  $K=\cap_{k\geq 0} \Lambda_k$  is a Cantor set and it is  minimal  for $IFS(f,g)$.

As in the Example (1), following properties are satisfied:

\begin{enumerate}
\item $f^{-1}(K)\cap \widehat{I}\subset K$ and $g^{-1}(K)\cap \widehat{I}\subset K$.
 \item If $x\in \widehat{I}$ then $\overline{O(x)}\supset K.$
\end{enumerate}

From now on, we write $\{ f_{_{K,I}},g_{_{K,I}} \}$ when  IFS generated by  $f$ and $g$ has minimal set $K\subset I$, which is a  Cantor set,  $f$ and $g$ are the  identity map in $I^{c}$ and verifies properties (a) and (b) as above.

\item Let $IFS(f)$,  where $f$ is a Denjoy diffeomorphism.

\end{enumerate}

\subsection{Examples of IFS with an interval minimal set}\label{subsec2}

Next  example is an IFS whose minimal set is an interval and each generator has 0  rotation number.\\
Construction and properties of the following example are in \cite{br}. Let $I^{' }=[a,b]$ and $I$ be two intervals with $I^{'}\subsetneq I$, we consider  $T^{+}_{I^{'}I}$ and $T^{-}_{I^{'}I}$ be linear maps in $I^{'}$ (see Figure \ref{tmas} (a)) and
 $T^{+}_{I^{'},I}$ and $T^{-}_{I^{'},I}$ are the identity map on $I^{c}$. The minimal set of $IFS(T^{+}_{I^{'}I},T^{-}_{I^{'}I})$ is the interval  $I^{'}$. Let us denote  $f=T^{-}_{I^{'}I}$ and $g=T^{+}_{I^{'}I}$; for any $x\in [a,b]$ we have that $f^{n}(x)\to a$ and $g^{n}(x)\to b$. Then, if $M\subset [a,b]$ is a minimal set then $\overline{O(a)}= \overline{O (b)}=M$. Suppose that  $\overline{O (a)}\neq [a,b]$, let $I_0$ be the connected  component of $[a,b]\setminus \overline{O (a)}$ with maximum length. Since $f([a,b])\cup g([a,b])=[a,b]$ then $f^{-1}(I_0)\cap [a,b]\neq\emptyset$ or $g^{-1}(I_0)\cap [a,b]\neq\emptyset$. In the case that $f^{-1}(I_0)\cap [a,b]\neq\emptyset$, if $f^{-1}(I_0)\subset [a,b]$ as $|f^{'}|<1$ then $|f^{-1}(I_0)|> |I_0|$ which is a contradiction. Therefore $f^{-1}(I_0)$ contains $a$ or $b$, hence $I_0$ contains  $a$ or $b$ which is also a contradiction. The case  $g^{-1}(I_0)\cap [a,b]\neq\emptyset$ is analogous.

\begin{figure}[h]
\psfrag{t+}{$T^{+}_{I^{'}I}$}\psfrag{t-}{$T^{-}_{I^{'}I}$}
\psfrag{iii}{$I^{'}$}
\psfrag{z}{$I$}
\psfrag{a}{$a$}\psfrag{b}{$b$}

\begin{center}
\caption{\label{tmas}}
\subfigure[]{\includegraphics[scale=0.24]{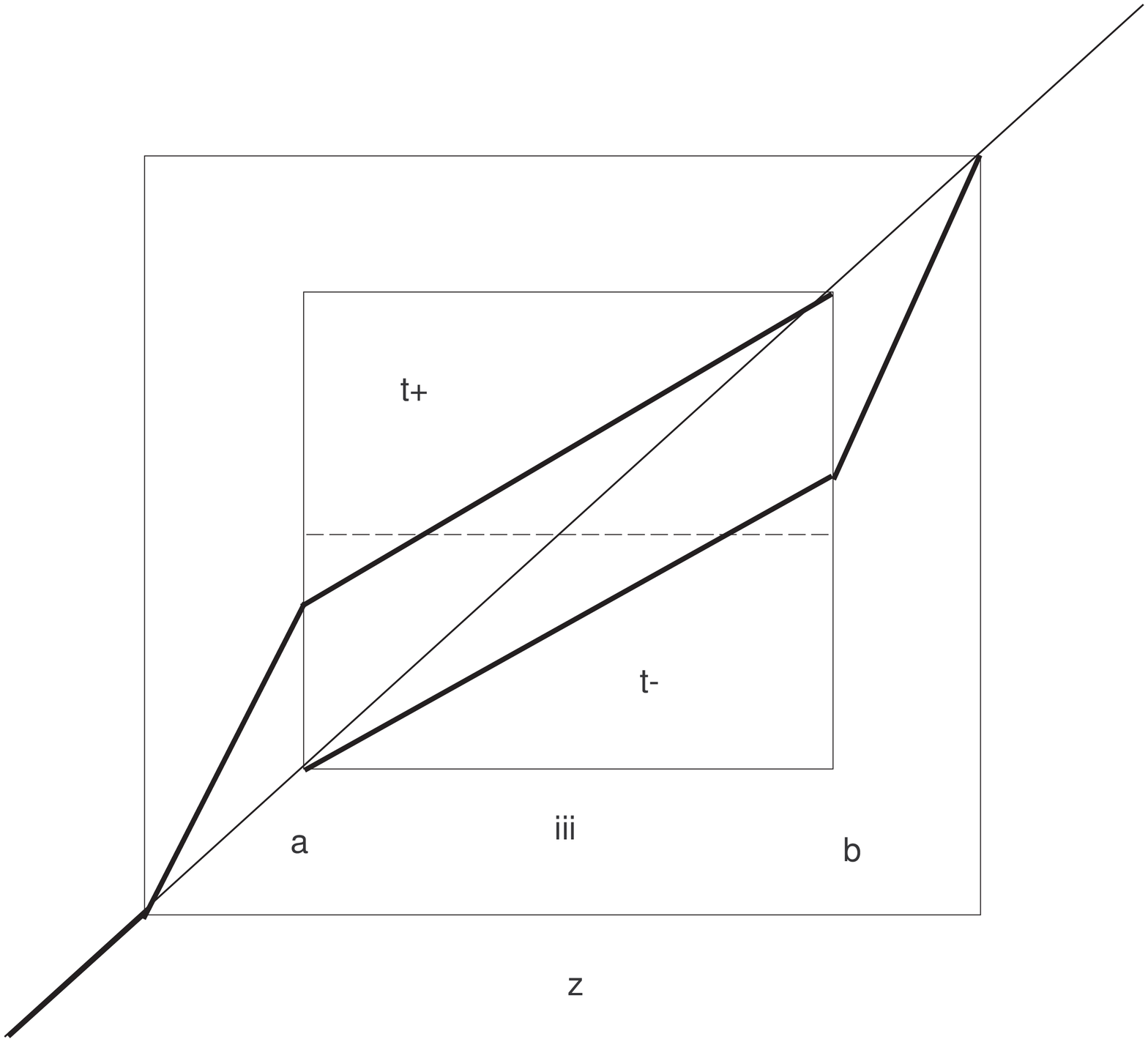}}
%\subfigure[]{\includegraphics[scale=0.1535]{h.eps}}
\end{center}
%\caption{\small{ \it{ In the figure (a) $C_R\cap [0,1/2]\subset \mathcal{A}$. In the figure (b), $\mathcal{A}=C_R$ }}.}
\end{figure}

Notice that for a big enough $n\in \N$, $IFS((T^{+})^{n}_{I^{'}I},(T^{-})^{n}_{I^{'}I})$ is in similar conditions than example (1) of section \ref{subsec1}, therefore it has a Cantor  minimal set. It is obvious that any minimal set of  $IFS((T^{+})_{I^{'}I},(T^{-})_{I^{'}I})$ contains a minimal set of $IFS((T^{+})^{n}_{I^{'}I},(T^{-})^{n}_{I^{'}I})$. Therefore the minimal set of $IFS((T^{+})^{n}_{I^{'}I},(T^{-})^{n}_{I^{'}I})$ and  $IFS((T^{+})^{}_{I^{'}I},(T^{-})^{}_{I^{'}I})$ are different from a topological point of view.

\subsection{Map to consider.}\label{subsec3}

   Given three intervals $J, J^{'}$ and $I$ with $J^{'}\subsetneq J$ and $J\cap I=\emptyset$. Let $h$ be a  homeomorphism such that $h|_{J^{'}}=Id$  $h(J^{c})\subset I$.\\ We will denote $h$ by $h_{J^{'},J,I}$.
\vspace{.5cm}

\section{Example's Construction.}\label{examples}
%\vspace{.5cm}

In this section we construct seven examples to prove Theorem \ref{theor}.

{\bf{Example 1}}. $M=K$ (minimal set that is a Cantor set) or $M$ is a finite set. We consider the following examples:
\begin{enumerate}
\item Let  $IFS( f_{_{K,I}},g_{_{K,I}} )$ as in  subsection \ref{subsec1} examples (1) y (2).
\item Let $g$ be a Denjoy homeomorphism and we consider  $IFS( g )$.
\end{enumerate}

It is clear that both examples have a Cantor minimal set.
The difference between (1) y (2) is that example (1) can be constructed $C^{\infty}$ and the generators of the IFS have 0 rotation number. In (2) the map $g$ can be  $C^{1+\alpha}$ with $\alpha <1$ and the rotation number of $g$ is irrational. In (2) the minimal is unique but this is not true for example (1).

\medskip

Let $f$ be a homeomorphism with fixed points. It is obvious that the family $IFS( f )$ has a finite minimal set.

These are examples if item (1) of Theorem \ref{theor}.\\

Examples with $int(M)\neq\emptyset$.

{\bf{Example 2.}}  $M=S^{1}$.\\
It is easy to find examples where the generators have irrational rotation number, but our aim is to exhibit examples of IFS with generators having 0 rotation numbers.

 Let $I\subset S^{1}$ be an interval, and let  $I^{'}$ and $J$ intervals such that $J\subset I^{'}\subsetneq I$. Let $H_1: S^{1}\to S^{1}$ be a homeomorphism such that $S^{1}\setminus I^{'}\subset H_1(J)$ and $H_2: S^{1}\to S^{1}$  such that $H_2(S^{1}\setminus I^{'})\subset J$.  We consider $$ IFS (T^{+}_{I^{'},I}, \ T^{-}_{I^{'},I} , \ H_1 , \ H_2 ). $$ The minimal set $M$ for this IFS is $M=S^{1}$. To show this, let $x\in S^{1}$. If $x\in I^{'}$ then $\overline{O(x)}\supset I^{'}$. Hence, $H_2(\overline{O(x)})\supset H_2(I^{'})\supset J^{c}$. Therefore $\overline{O(x)}=S^{1}$. If $x\notin I^{'}$ then $H_2(x)\in I^{'}$. From the previous case we have $\overline{O(x)}=S^{1}$.

{\bf{Example 3.}} $M=\mathcal{I}\cup K\cup N$ (a Cantor set union intervals union countable points). In this case we have $\partial \mathcal{I}=N$.

 Let $I_0\subset S^{1}$ be an interval. We consider IFS $( f_{_{K,I_0}},g_{_{K,I_0}} )$. Let $[a,b] \subset I_0$ be a minimal  interval containing a $K$,  let $I$ be a connected component of  $K^{c}$ with $I\subset [a,b]$ and $I^{'}\subset I$. Let $J\subset S^{1}\setminus I_0$ and $J^{'}\subset J$.
 We consider $$IFS( f_{_{K,I_0}},\ \ g_{_{K,I_0}},\ \  h_{J^{'},J,I^{'}},\ \ T^{+}_{I^{'}I},\ \ T^{-}_{I^{'}I}).  $$

We will prove that the minimal set for this IFS is  $$M=K\cup  \{   \phi (I^{'}): \ \ \phi \in IFS(        f_{_{K,I_0}}, g_{_{K,I_0}})            \} \cup I^{'}.     $$

We begin by proving that $M$ is IFS-invariant.
Let $x\in M$. If $x\in K$, then $f_{_{K,I_0}}(x)\in K$ and  $g_{_{K,I_0}}(x)\in K$. As $K\subset I_0$ then $x\notin J$ so $h_{J^{'},J,I^{'}}(x)\in I^{'}$. Finally, since $T^{+}_{I^{'}I}$ and $ T^{-}_{I^{'}I}$ are the identity on $K$ then $T^{+}_{I^{'}I}(x)\in K$ and $ T^{-}_{I^{'}I}(x)\in K$.\\
 If $x\in I^{'}$, then $f_{_{K,I_0}}(x)\in f_{_{K,I_0}}(I^{'})\subset M$. The same occurs with $g_{_{K,I_0}}(x)$. By definition of $h_{J^{'},J,I^{'}}$ we have that   $h_{J^{'},J,I^{'}}(x)\in I^{'}$. Also by definition of $T^{+}_{I^{'}I}$ we have that $T^{+}_{I^{'}I}(x)\in I^{'}$ and the same occurs with $ T^{-}_{I^{'}I}$. If $x\in \phi (I^{'})$, the proof is analogous that in case $x\in I^{'}$.
 Now, we are going to prove that  any $x\in M$ has dense orbit in $M$. Let $y\in M$ and  $U$ a neighborhood  of $y$ we will show that there  exists $\phi \in IFS( f_{_{K,I_0}},\ \ g_{_{K,I_0}},\ \  h_{J^{'},J,I^{'}},\ \ T^{+}_{I^{'}I},\ \ T^{-}_{I^{'}I})  $ with $\phi (x)\in U$. First we take $x\in K$. If $y\in K$, it is clear that there exists $\phi \in IFS( f_{_{K,I_0}},\ \ g_{_{K,I_0}}) $ such that $\phi (x)\in U$. If $y\in I^{'}$, as $x\notin J$ then $ h_{J^{'},J,I^{'}}(x)\in I^{'}$, then there is $\phi_1\in IFS (T^{+}_{I^{'}I},\ \ T^{-}_{I^{'}I})$ such that $\phi_1\circ h_{J^{'},J,I^{'}}(x)\in U$. If $y\in \phi (I^{'})$ the proof is analogous.

 Now we take $x\in I^{'}$. If $y\in K$, then by property (b) of example (1) in Section \ref{subsec1}, there exists  $\phi \in IFS( f_{_{K,I_0}},\ \ g_{_{K,I_0}}) $ such that $\phi (x)\in U$. If $y\in  I^{'}$ then there is $\phi\in IFS (T^{+}_{I^{'}I},\ \ T^{-}_{I^{'}I})$ such that $\phi (x)\in U$. If $y\in \phi (I^{'})$, we consider $U^{'}=\phi ^{-1}(U)\cap I^{'}$. There is $\phi_1\in IFS (T^{+}_{I^{'}I},\ \ T^{-}_{I^{'}I})$ such that $\phi_1(x)\in U^{'}$, therefore $\phi \phi_1(x)\in U$.

  If $x\in \phi (I^{'})$, then $ h_{J^{'},J,I^{'}}(x)\in I^{'}$, and the proof is the same that case $x\in I^{'}$.

{\bf{Example 4.}} $M=K\cup \mathcal{I}\cup N   $    (a Cantor set union intervals union countable points). In this case we have $N \nsubseteq \partial \mathcal{I}$.

 Let $I_0\subset S^{1}$ be an interval. We consider IFS $( f_{_{K,I_0}},g_{_{K,I_0}} )$. Let $[\alpha ,\beta ] \subset I_0$ be a minimal  interval containing a $K$.

Let $I$ be a connected component of  $K^{c}$ with $I\subset [\alpha ,\beta ]$ and $I^{'}\subset I$.

Let $I^{'}=[a,b]$, and $\varphi$ as in Figure \ref{varphi}, with $\varphi|_{(I^{'})^{c}}=Id$.

Let $I_1\subset [a,b]$  such that $\varphi ^{n}(I_1)\cap \overline{I_1}=\emptyset$ for any $n>0$. Notice that $\varphi ^{n}(I_1)\to b$.

Let  $\widehat{I}_1$ such that $I_1\subset \widehat{I}_1$ and $\varphi (I_1)\cap\overline{ \widehat{I}_1}= \emptyset$.

 Let $J\subset S^{1}\setminus I_0$ and $J^{'}\subset J$.

 We consider  $$IFS( f_{_{K,I_0}},\ \ g_{_{K,I_0}},  \ \ \varphi,        \ \  h_{J^{'},J,I_{1}},   \ \ T^{+}_{I_{1}\widehat{I_1}}, \ \ T^{-}_{I_{1}\widehat{I_1}}
   ) .  $$

Let us denote $\mathcal{F}= IFS(        f_{_{K,I_0}}, g_{_{K,I_0}}) $

Then the minimal set  is  $$M=K\cup \bigcup_{n\geq 0}\varphi^{n}(I_1)      \cup  \bigcup_{\phi\in\mathcal{F} } \phi ( \bigcup_{n\geq 0}\varphi^{n}(I_1) )     \cup \bigcup_{\phi\in\mathcal{F} } \phi (b)\cup \{b\}.$$

The proof of minimality of $M$ is analogous of above examples.

\begin{figure}[h]
\psfrag{i}{$I^{'}$}\psfrag{j}{$J$}
\psfrag{varphi}{$\varphi$}
\psfrag{varphi2}{$\psi$}
\psfrag{p}{$p$}
\psfrag{a}{$a$}
\psfrag{b}{$b$}
\psfrag{aa}{$0$}
\psfrag{bb}{$1$}
\psfrag{i0}{$I_0$}\psfrag{i1}{$I_1$}
\psfrag{i2}{$I_{2}$}
\psfrag{ii}{$I$}
\begin{center}
\caption{\label{varphi}}
\subfigure[]{\includegraphics[scale=0.15]{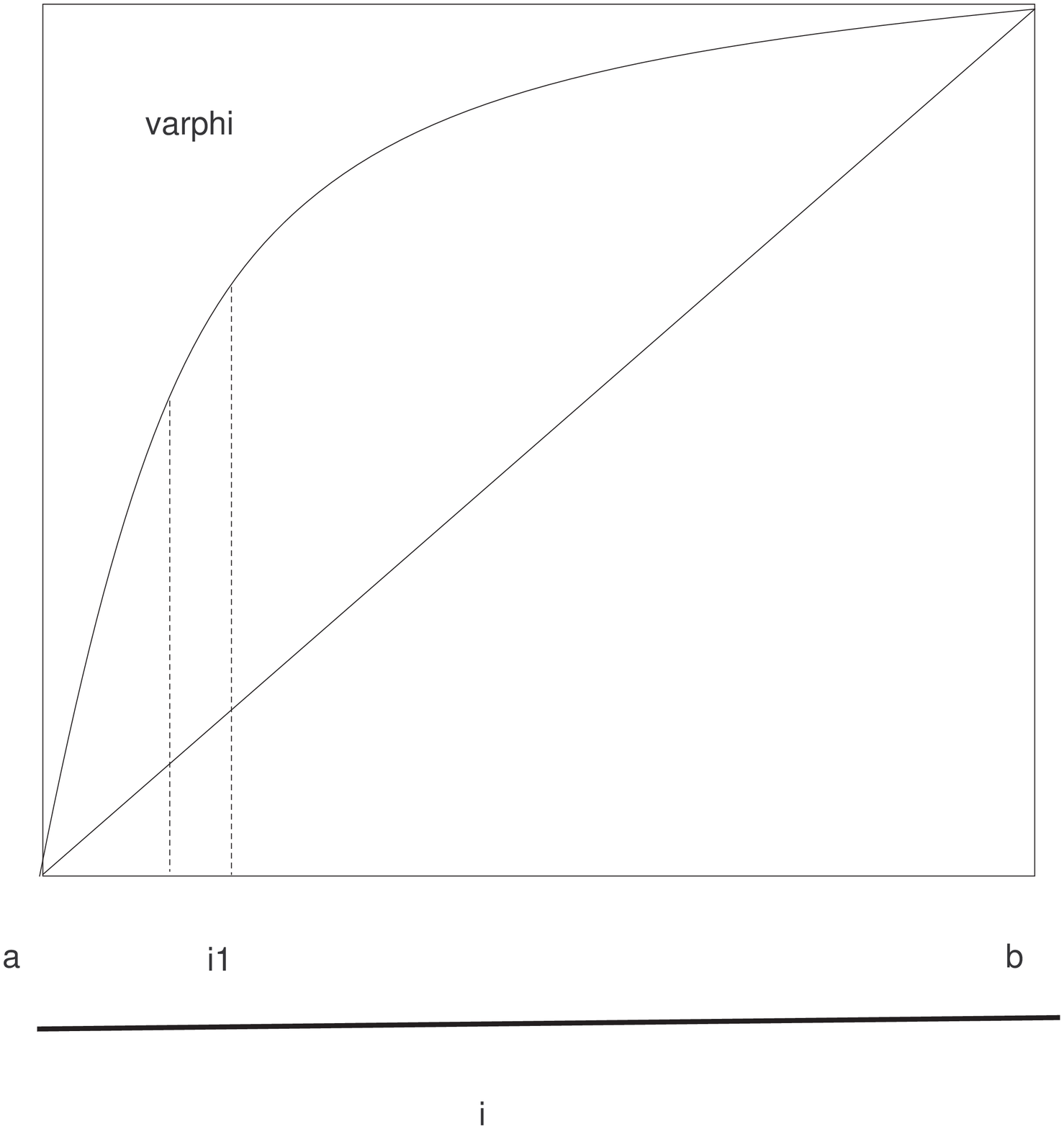}}
\subfigure[]{\includegraphics[scale=0.1535]{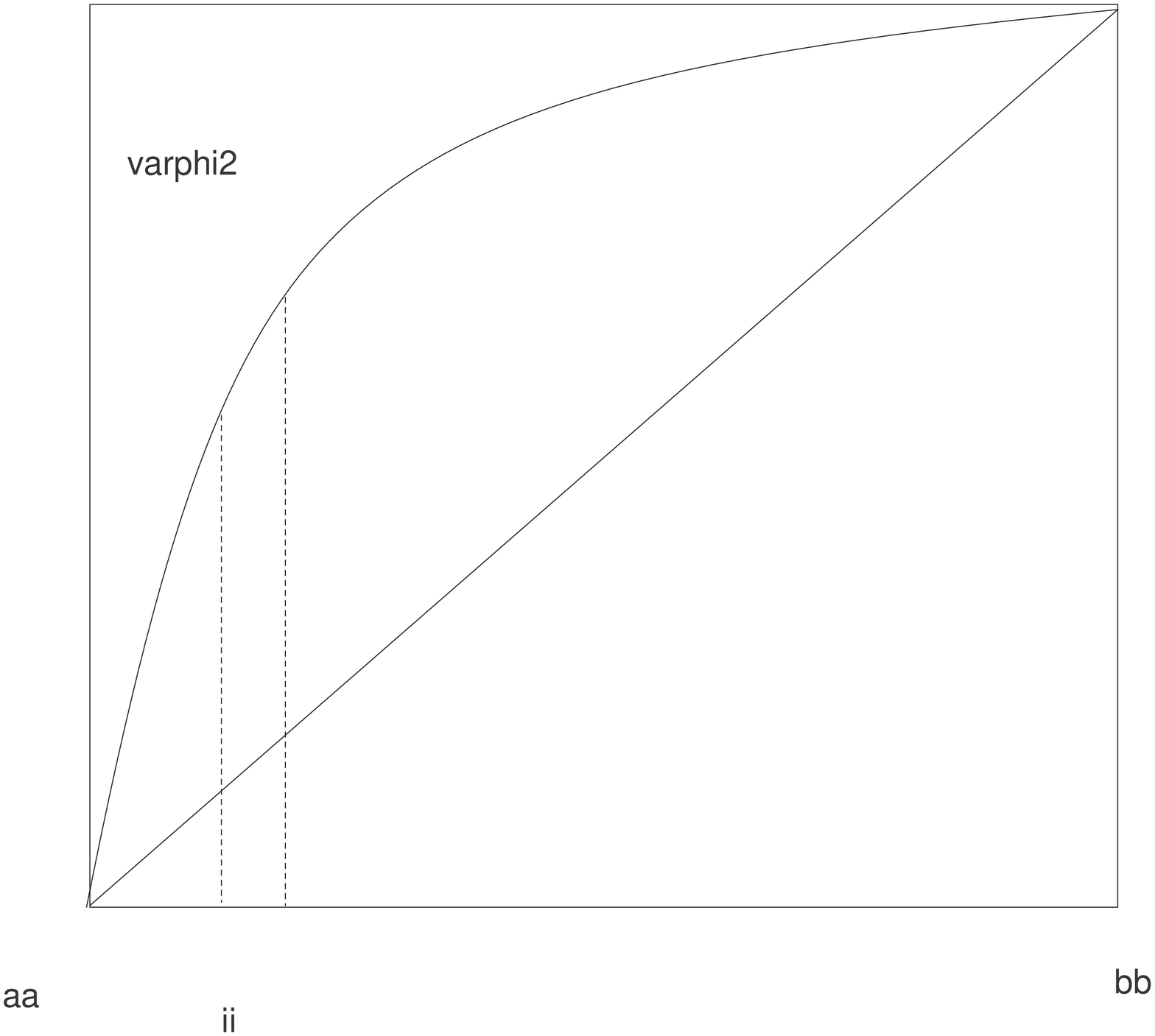}}
\end{center}
%\caption{\small{ \it{ In the figure (a) $C_R\cap [0,1/2]\subset \mathcal{A}$. In the figure (b), $\mathcal{A}=C_R$ }}.}
\end{figure}

{\bf{Example 5.}}  $M=K\cup \mathcal{I}\cup N   $ ( $M$ is an union of intervals.)\\
Let $I_0$, $I$ and $\psi$ as Figure \ref{intervalos} (a) and $I_1\supset I_0$ such that $\overline{\psi^{n}(I_1)}\cap \overline{I_1}=\emptyset$.
Let $J$ a interval such that $\overline{J}\subset S^{1}\setminus (\bigcup_{n\geq 0}  \{   \psi (I_1) \} \cup I)$ and $J^{'}\subset J$. We consider $h_{J^{'},J,I_{0}}$ and $H$ as Figure \ref{intervalos} (b). Note that $H(I_0)=I$ and $H( \bigcup_{n\geq 1}  \{   \psi^n (I_0) \} \cup I )\subset   \psi (I_0)$.

\begin{figure}[h]
\psfrag{i}{$I^{'}$}\psfrag{j}{$J$}
\psfrag{varphi}{$\psi$}
\psfrag{varphi2}{$\psi$}
\psfrag{h}{$H$}
\psfrag{a}{$a$}
\psfrag{b}{$b$}
\psfrag{aa}{$0$}
\psfrag{bb}{$1$}
\psfrag{i1}{$I_0$}
\psfrag{i2}{$I_{}$}
\psfrag{i3}{$I_0$}
\psfrag{ii}{$I_1$}
\psfrag{i4}{$\psi (I_{0})$}
\begin{center}
\caption{\label{intervalos}}
\subfigure[]{\includegraphics[scale=0.16]{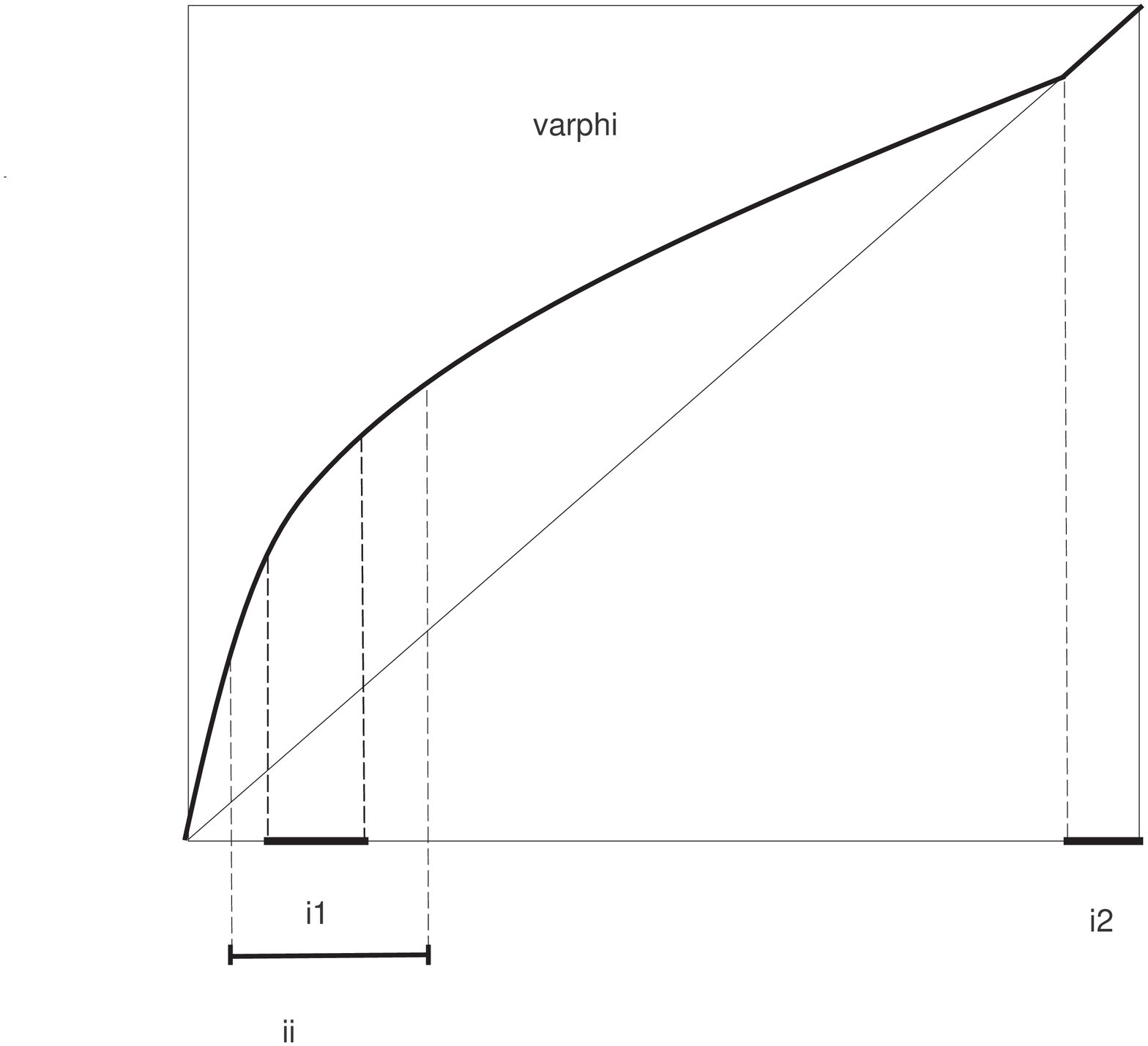}}
\subfigure[]{\includegraphics[scale=0.17]{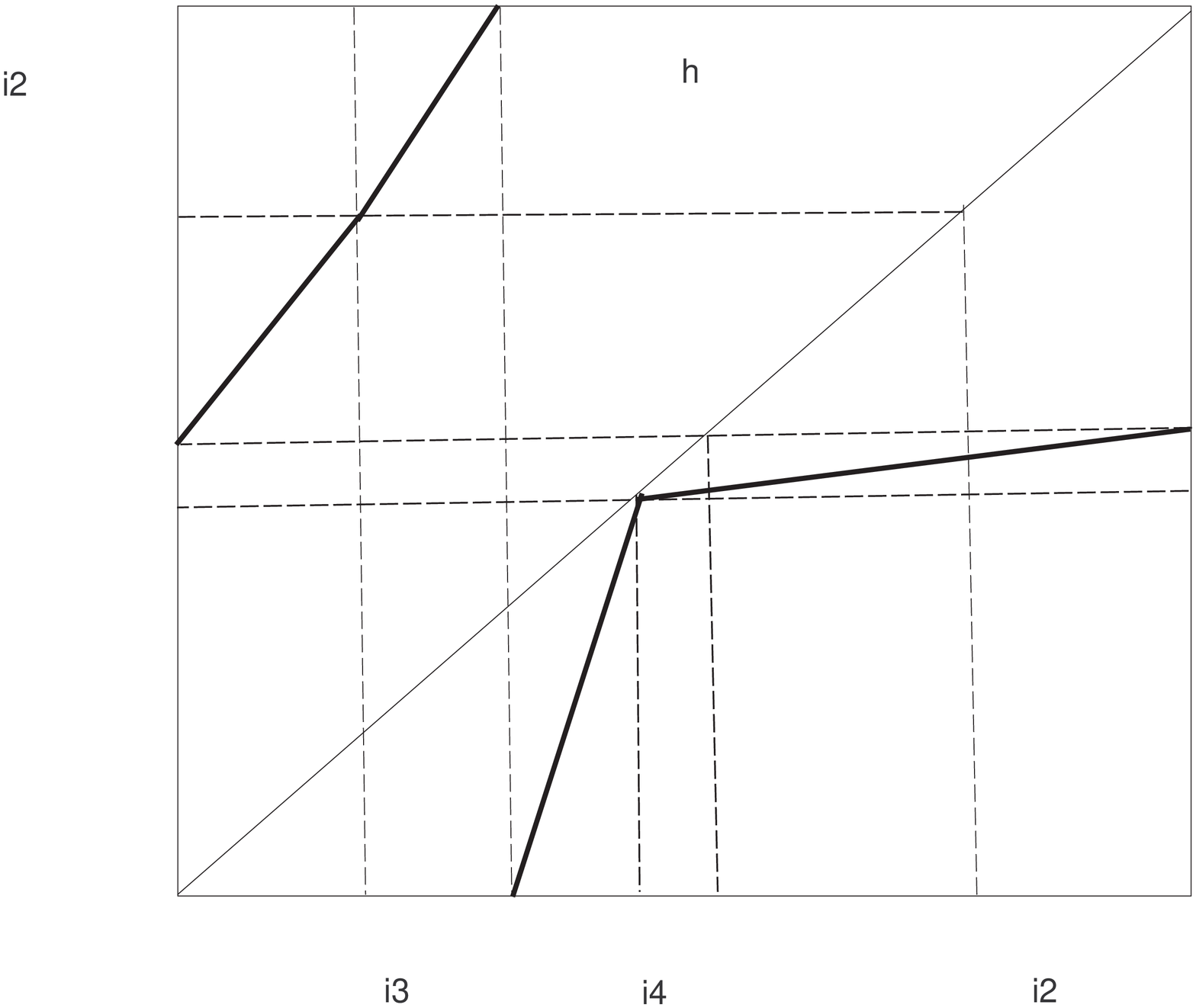}}
\end{center}
%\caption{\small{ \it{ In the figure (a) $C_R\cap [0,1/2]\subset \mathcal{A}$. In the figure (b), $\mathcal{A}=C_R$ }}.}
\end{figure}

We consider $$IFS( T^{+}_{I_{0}I_1},\ \ T^{-}_{I_{0}I_1} , \ H , \  \psi,  h_{J^{'},J,I_{0}},     ).  $$

We will prove that the minimal set for this IFS is  $$M=\bigcup_{n\geq 0}  \{   \psi^{n} (I_0) \} \cup I.     $$

We begin by proving that $M$ is IFS-invariant.
Let $x\in M$. If $x\in I_0$, then  $T^{+}_{I_{0}I_1}(x),  T^{-}_{I_{0}I_1}(x)\in I_0$, $H(x)\in I$ and as $J\cap I_0=\emptyset$ we have that $h_{J^{'},J,I_{0}}(x)\in I_0$. If $x\in  \psi^{n} (I_0) $ with $n>0$ as  $\psi^{n} (I_0)\cap I_1=\emptyset$ then $T^{+}_{I_{0}I_1}$ and $ T^{-}_{I_{0}I_1}$ are the identity on $\psi^{n} (I_0) $ with $n>0$. $H(x)\in \psi (I_0)$ and $J\cap \psi (I_0)=\emptyset$ then $h_{J^{'},J,I_{0}}(x)\in I_0$. The proof is analogous that in case $x\in I$.

 Now, we are going to prove that  any $x\in M$ has dense orbit in $M$. Let $y\in M$ and  $U$ a neighborhood  of $y$, we will show that there  exists $\phi \in IFS( T^{+}_{I_{0}I_1},\ \ T^{-}_{I_{0}I_1} , \ H , \  \psi,  h_{J^{'},J,I_{0}},     )  $ with $\phi (x)\in U$. First we take $x\in I_0$. If $y\in I_0$, it is clear that there exists $\phi \in IFS(  T^{+}_{I_{0}I_1},\ \ T^{-}_{I_{0}I_1} ) $ such that $\phi (x)\in U$.
 If $y\in  \psi^{n} (I_0)$, as $x\notin J$ then $ h_{J^{'},J,I_{0}}(x)\in I_{0}$, then there is $\phi_1\in IFS ( T^{+}_{I_{0}I_1},\ \ T^{-}_{I_{0}I_1})$ such that $\psi^{n}\circ\phi_1\circ h_{J^{'},J,I^{'}}(x)\in U$.
In the reaming cases the proof is analogous.

{\bf{Example 6.}} We consider $S^{1}$ as the interval $[0,1]$ identifying  $0$ with $1$.

 $M=\mathcal{I}\cup N$ where $\partial \mathcal{I}\cup \{0\} = N$. (countable union of intervals  union a finite number of points.

Let $\psi$ as in  Figure
 \ref{varphi} (b).  Let $I$ such that $\overline{I}\cap \psi (I)=\emptyset$, therefore it holds that $\psi ^{n}(I)\to 1$ $(1=0)$ . Let $J$ such that
$\overline {J}\cap \psi^{n} (I)=\emptyset$, for any $n\geq 0$ and let $J^{'}\subset J$. Consider $I^{'}\subset I$ and

$$IFS( \psi ,\ \  h_{J^{'},J,I^{'}},\ \ T^{+}_{I^{'}I},\ \ T^{-}_{I^{'}I}) .  $$
It is easy to show that

$$M= \{ 0\}\cup \bigcup _{n\geq 0}\psi^{n} (I^{'}). $$
{\bf{Example 7.}} $M=\mathcal{I}\cup K$ with $\partial \mathcal{I}\subset K$. (symmetric Cantorval).

Next lemma will be used in the construction of this example.
\begin{lemma}\label{l33}
Let $K_1\subset [a,b]$ and $K_2\subset [c,d]$ be  Cantor sets with $a,b\in K_1$ and $c,d\in K_2$ such that:

\begin{enumerate}
\item There exist sequences  $\{I_i\}_{i\in\N}$ and $\{I^{'}_i\}_{i\in\N}$, where $I_i$ and $ I^{'}_i$ are connected  components of $K_1^{c}$ with $(\cup I_i)\cup (\cup I^{'}_i) =K_{1}^{c}$ and $I_i\cap I'_k=\emptyset$ for all $i,k$.
\item There exist sequences  $\{J_j\}_{j\in\N}$ and $\{J^{'}_j\}_{j\in\N}$, where $J_j$ and $ J^{'}_j$ are connected  components of $K_2^{c}$ with $(\cup J_j)\cup (\cup J^{'}_j) =K_{2}^{c}$ and $J_j\cap J'_k=\emptyset$ for all $j,k$.
\item $\overline{  \{\partial I_i:\ i\in \N\}}=\overline{  \{\partial I^{'}_i:\ i\in \N\}}=K_1$ and $\overline{\{\partial J_j: \ j\in \N \}}=\overline{\{\partial J^{'}_j: \ j\in \N \}}=K_2$.

\end{enumerate}

Then there exists a increasing homeomorphism   $\psi:[a,b]\to [c,d]$ such that $\psi (K_1)=K_2$ and $\psi (\cup I_i)=\cup J_j$.

\end{lemma}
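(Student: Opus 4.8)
The plan is to forget the metric structure and argue purely from the combinatorics of the complementary gaps. Write $\mathcal{G}_1=\{I_i\}\cup\{I'_i\}$ for the collection of all connected components of $K_1^{c}$ and $\mathcal{G}_2=\{J_j\}\cup\{J'_j\}$ for those of $K_2^{c}$; each is a countable family of pairwise disjoint open intervals, linearly ordered from left to right. Because $a,b\in K_1$ and $K_1$ is a Cantor set (hence perfect), between any two gaps of $K_1$ lies a nondegenerate piece of $K_1$ and therefore infinitely many further gaps, and gaps accumulate at both $a$ and $b$; thus $\mathcal{G}_1$ is a countable dense linear order without endpoints, and likewise $\mathcal{G}_2$. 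The first thing I would do is reinterpret hypothesis $(3)$ order-theoretically: since $\overline{\{\partial I_i\}}=K_1$, given any $u<v$ in $K_1$ the interval $(u,v)$ meets $\{\partial I_i\}$, and as an open gap $I_i$ cannot contain the $K_1$-points $u$ or $v$, some entire gap $I_i$ lies inside $(u,v)$. This says exactly that the class of $I$-gaps is dense in $\mathcal{G}_1$; symmetrically the $I'$-gaps are dense, and the analogous density holds for the two classes in $\mathcal{G}_2$.

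With this in hand the core of the proof is a colour-preserving back-and-forth argument in the spirit of Cantor's theorem on countable dense orders. Colour a gap $0$ if it belongs to $\{I_i\}$ (resp. $\{J_j\}$) and $1$ otherwise. Enumerate $\mathcal{G}_1=\{g_n\}$ and $\mathcal{G}_2=\{h_n\}$ and build an increasing chain of finite partial maps $\sigma$ that preserve both order and colour: at odd stages I adjoin the first not-yet-matched $g_n$, at even stages the first not-yet-matched $h_n$, in each case inserting it in the unique position prescribed by the finitely many pairs already fixed. That position is an order-interval of the target family which, by the density established above, contains infinitely many gaps of the required colour, so an unmatched partner of the correct colour is always available. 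The union $\sigma:\mathcal{G}_1\to\mathcal{G}_2$ is then an order-isomorphism with $\sigma(\{I_i\})=\{J_j\}$.

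Finally I would promote $\sigma$ to the desired map: on each gap $g$ of $K_1$ let $\psi$ be the increasing affine homeomorphism onto $\sigma(g)$, set $\psi(a)=c$, $\psi(b)=d$, and for $x\in K_1$ define $\psi(x)=\sup\{\psi(t):t\in g,\ g\in\mathcal{G}_1,\ g<x\}$. Order-preservation of $\sigma$ makes $\psi$ strictly increasing, and its image contains every gap of $K_2$; since the gaps of $K_2$ are dense in $[c,d]$ and all lie in the image, $\psi$ can have no jump, hence is a continuous strictly increasing map of $[a,b]$ onto $[c,d]$, i.e. a homeomorphism. As a bijection carrying $\bigcup\mathcal{G}_1$ onto $\bigcup\mathcal{G}_2$ it carries the complements onto one another, so $\psi(K_1)=K_2$, while $\psi(\cup I_i)=\cup J_j$ by construction of $\sigma$. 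I expect the only genuine obstacle to be the extension step of the back-and-forth, namely checking that at each stage an unmatched gap of the prescribed colour really exists in the prescribed position; this is precisely the role of hypothesis $(3)$, since without the density of each family of endpoints the colour-respecting matching could get stuck. For that reason I would take care to state and prove the density reduction of the first paragraph cleanly before running the induction, after which the verification that the monotone extension is a homeomorphism is routine.
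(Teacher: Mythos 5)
Your proof is correct, and it shares the paper's two-stage skeleton --- first match the gaps of $K_1$ with the gaps of $K_2$ by an order- and colour-preserving bijection, then extend over the Cantor sets --- but both stages are implemented differently. For the matching, the paper never enumerates the gaps: it pairs the longest $I$-gap with the longest $J$-gap, then in each of the two resulting positions pairs the longest $I'$-gap with the longest $J'$-gap, and proceeds inductively, alternating colours at each level; this maximal-length choice is what forces every gap on both sides to be matched eventually, though the paper leaves that exhaustion unverified, whereas your enumeration-driven back-and-forth makes it automatic. Your preliminary reduction of hypothesis (3) to order-density of each colour class is exactly what makes the paper's ``choose one of maximal length to the left/right/between'' steps possible, but the paper uses it only implicitly there. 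For the extension, the paper defines $\psi$ on the union of the closed gaps, proves it uniformly continuous by a contradiction argument (pulling back a gap of $K_2$ lying between $\psi(x_n)$ and $\psi(y_n)$), and then extends by continuity, invoking hypothesis (3) once more for density of the gap endpoints in $K_1$; you instead extend by suprema and observe that a strictly increasing map whose image contains the dense union of the gaps of $K_2$ can have no jumps. Your route buys a cleaner second half --- it sidesteps the uniform-continuity argument, which as written in the paper is delicate because the gap $J$ produced there varies with $n$ --- at the modest cost of verifying well-definedness and strict monotonicity of the sup-extension, which is routine.
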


\begin{proof}

Let $I_{i_{0}}$ be an interval of  $\{I_n \}$ with maximal length and we choose $J_{j_{0}}$ in an analogous way.
Let $\psi :\overline{I}_{i_{0}}\to \overline{J}_{j_{0}}$ any increasing homeomorphism.
Among all the elements of $\{I'_n \}$ to the left of $I_{i_{0}}$, let $I'_{i_{1}}$ one of maximal length.

Among all the elements of $\{I'_n \}$ to the right of $I_{i_{0}}$, let $I'_{i_{2}}$ one of maximal length.

In an analogous way we define  $J^{'}_{j_{1}}$ and $J^{'}_{j_{2}}$. So define

 $\psi :\overline{I^{'}}_{i_{1}}\to \overline{J^{'}}_{j_{1}}$ and $\psi :\overline{I^{'}}_{i_{2}}\to \overline{J^{'}}_{j_{2}}$ as any increasing homeomorphism.

Among all the elements of  $\{I_n \}$ to the left of $I^{'}_{i_{1}}$ let $I^{}_{i_{3}}$ be one of maximal length. Let $I^{}_{i_{4}}$ be one of maximal length between $I^{'}_{i_{1}}$ and $I^{}_{i_{0}}$. let $I^{}_{i_{5}}$  be one of maximal length between $I^{}_{i_{0}}$ and $I^{'}_{i_{2}}$, and let $I^{}_{i_{6}}$ be
 be one of maximal length to the right of $I^{'}_{i_{2}}$. Analogously define $J_{j_{3}}$, $J_{j_{4}}$, $J_{j_{5}}$, $J_{j_{6}}$ and  $\psi :\overline{I}_{i_{p}}\to \overline{J}_{j_{p}}$ for $p=3,4,5,6.$

Proceeding inductively we obtain   $\psi: (\cup \overline{I}_i )\cup (\cup \overline{I^{'}}_i)    \to (\cup \overline{J}_j )\cup (\cup\overline{J^{'}}_j)$ an increasing map such that if $I$ and $I^{'}$ are connected components of $K_1^{c}$ such that $I$ is on the left of $I^{'}$ then $\psi (I)$ is on the left of $\psi(I^{'})$.

  Now we will prove that  $\psi: (\cup \overline{I}_i)\cup(  \cup \overline{I^{'}}_i)    \to (\cup \overline{J}_j) \cup (\cup \overline{J^{'}}_j)$ is uniformly continuous.
Suppose by contradiction that there exist  $\{x_n\}$,  $\{y_n\}$ and
$\varepsilon_0>0$ such that $d(x_n,y_n)<1/n$ and
$d(\psi(x_n),\psi(y_n))>\varepsilon_0$. Without loss of generality we can assume that
 $x_n<y_n$,  $x_n,y_n\to x_0\in [a,b]$ and  $x_n$ and $y_n$ are in different connected components of $ (\cup \overline{I}_i)\cup(  \cup \overline{I^{'}}_i)  $  . Since $\psi$ is continuous in $(\cup \overline{I}_i)\cup(  \cup \overline{I^{'}}_i) $, we have that $x_0 \notin (\cup \overline{I}_i)\cup(  \cup \overline{I^{'}}_i)$.

So,  if $n$ is big enough,  $x_n$ and $y_n$ are in different connected components of $K_1^c$. As
$d(\psi(x_n),\psi(y_n))>\varepsilon_0$ there exists $J=(\alpha, \beta)$ connected component
of $K_2^c$ such that $\psi(x_n)\leq\alpha<\beta\leq \psi(y_n)$,  as
$(\cup  J_j)\cup (\cup J^{'}_j)=K_2^c$ then there exists a connected components $I$ of
$K_1^c$ with $I\subset (\cup  I_n)\cup (\cup I^{'}_n)$ and $\psi(I)=J$. But as
 $x_n\leq \psi^{-1}(\alpha )<\psi^{-1}(\beta)\leq y_n$ and $x_n,y_n\to
x_0$ we obtain a contradiction.

As $\overline{  \{\partial I_i:\ i\in \N\}}=\overline{  \{\partial I^{'}_i:\ i\in \N\}}=K_1$ then we can defined  $\psi$ in  $[a,b]$ , by continuity and we are done.\end{proof}

\begin{figure}[h]
\psfrag{i0}{$I_{i_{0}}$}

\psfrag{i1}{$I^{'}_{i_{1}}$}

\psfrag{i2}{$I^{'}_{i_{2}}$}

\psfrag{i3}{$I_{i_{3}}$}

\psfrag{i4}{$I_{i_{4}}$}

\psfrag{i5}{$I_{i_{5}}$}

\psfrag{i6}{$I_{i_{6}}$}

\psfrag{t+}{$T^{+}$}

\psfrag{c}{$c$}
\psfrag{d}{$d$}
\psfrag{x0}{$x_0$}
\psfrag{h}{$h$}\psfrag{psi}{$\psi$}\psfrag{r1}{$r_1$}\psfrag{r2}{$r_2$}\psfrag{id}{$Id$}
\begin{center}
\caption{\label{figura7}}
\subfigure[]{\includegraphics[scale=0.15]{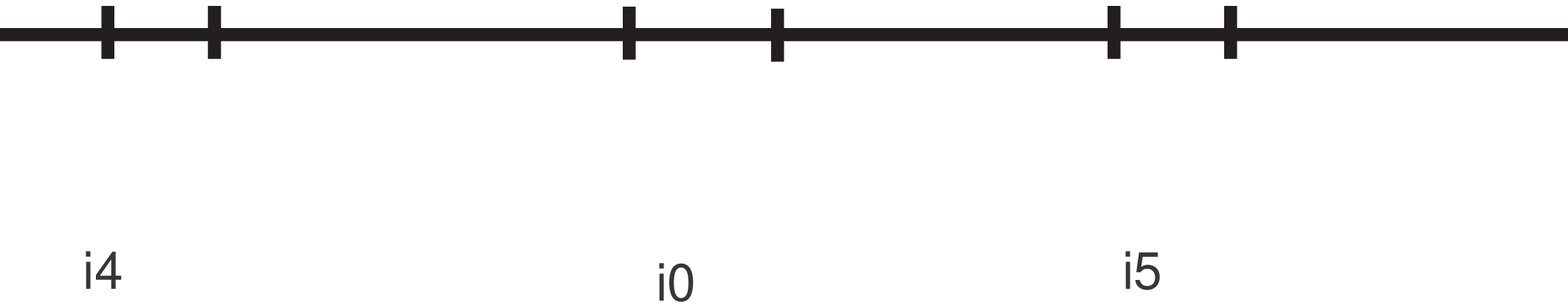}}
%\subfigure[]{\includegraphics[scale=0.15]{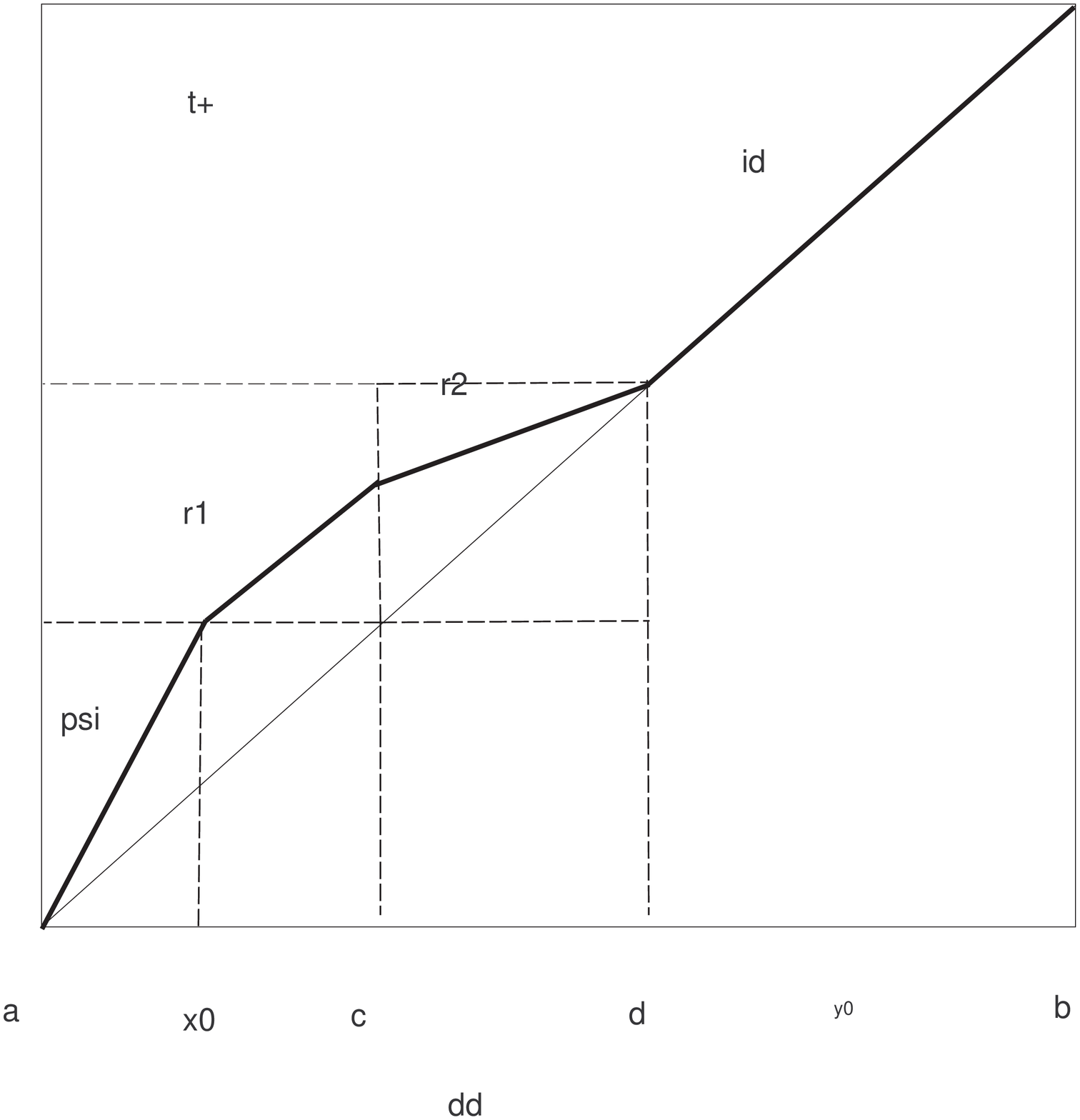}}
\end{center}
%\caption{\small{ \it{ In the figure (a) $C_R\cap [0,1/2]\subset \mathcal{A}$. In the figure (b), $\mathcal{A}=C_R$ }}.}
\end{figure}

Now we will  construct the example 7.

Given an interval $I\subset S^{1}$, we consider an interval  $I^{'}\subsetneq I$. Let $f,g$ and $h$ linear functions with angular coefficient  1/5 in $I^{'}$ (as Figure \ref{figura6} (a)) and such that $f|_{I^{c}}=g|_{I^{c}}=h|_{I^{c}}=Id$.
If $\Lambda_0=I^{'}$ and $\Lambda_{k+1}=f(\Lambda_k)\cup g(\Lambda_k)\cup h(\Lambda_k)$ then $K=\cap_{k\geq 0} \Lambda_k$ is a Cantor set.
\begin{rk}\label{rk1}

We make the following observations regarding the family  $\mathcal{F}_1=IFS(f, g,h) $:
\begin{itemize}
\item If $J\subset I^{'} $ is a connected component of $K^{c}$ then $J=\phi (I_0)$ or $J=\phi (I_1)$ with $\phi\in \mathcal{F}_1$, where $I_0$ and $I_1$ are as Figure \ref{figura6} (a).

\item If $J_0=\phi_0(I_0)$ and  $J_1=\phi_1(I_1)$ with $\phi_0,\phi_1 \in \mathcal{F}_1$ then $J_0\cap J_1=\emptyset$.

Therefore the set of connected components of $K^{c}$ is the union of $\{I_n\}$ and $\{J_n\}$, where $I_n=\phi_0(I_0)$ for some $\phi_0 \in \mathcal{F}_1$ and $J_n=\phi_1(I_1)$ for some $\phi_1 \in \mathcal{F}_1$. Moreover $\overline{  \{\partial I_n:\ n\in \N\}}=\overline{  \{\partial J_n:\ n\in \N\}}=K$.

\item The set $K\cup (\cup_{\phi \in {\mathcal{F}_1}}\phi (I_0))$ is not connected neither is a Cantor set.

\end{itemize}

\end{rk}

We consider an interval $[a,b]\subset I^{'}$ with $a,b\in K$ such that $I_0\subset [a,b]$, $[a,b]\cap I_1=\emptyset$ and $K\cap [a,b]$ is a Cantor set. Let $\overline{I_0}=[c,d]$ and $x_0\in K\cap (a,c)$ be such that $K\cap [a,x_0]$ is a Cantor set. By the lemma \ref{l33}, we can to construct $\psi :[a,b]\to [a,b]$ a increasing homeomorphism such that:

\begin{itemize}
\item $\psi (K\cap [a,x_0])=K\cap [a,c]$.
\item If $\mathcal{H}=\{ \phi (I_0):\ \phi\in\mathcal{F}_1  \}$ then $\psi (\mathcal{H} \cap [a,x_0])\subset \mathcal{H}\cap [a,c]$.
\end{itemize}

Let $T^{+}$ be such that  (see Figure\ref{figura6} (b)):

\[T^{+}(x)= \left\{
\begin{array}{l}
\psi (x) \mbox{ if $x\in [a,x_0],$}\\
r_{1} (x) \mbox{ if $x\in [x_0,c]$} \mbox{, with } r_1([x_0,c])\subset [c,d] \mbox { and  } r_1 \mbox{ linear}, \\
r_2 (x) \mbox{ if $x\in [c,d]$} \mbox { and  } r_2 \mbox{ linear}, \\
Id \mbox{ if $x\in [a,d]^{c}.$}\\
\end{array}
\right.\]

In summary, the map $T^{+}$ satisfies the following properties: points that are in  $K\cap [a,x_0]$ are mapped by $T^{+}$ in points that are in $K\cap [a,c]$. The elements of $\mathcal{H}$ included in $[a,x_0]$ are mapped by $T^{+}$ in the elements of $\mathcal{H}$ that are in $[a,c]$ and $T^{+}([x_0,d])\subset [c,d].$\\
If we consider $y_0\in (d,b)$ in analogous way to  $x_0$, we can to construct $T^{-}$ (in analogous way to construction of  $T^{+}$) such that: points that are in  $K\cap [y_0,b]$ are mapped by $T^{-}$ in point that are in $K\cap [d,b]$, the elements of  $\mathcal{H}$ included in $[y_0,b]$ are mapped by $T^{-}$ in the elements of $\mathcal{H}$ that are included in  $[d,b]$ and $T^{-}([c,y_0])\subset [c,d]$ and $T^{-}([c,b]^{c})=Id.$ \\
Moreover taking $r_1$ and $r_2$ conveniently in $T^{+}$ ( in analogous way  for $T^{-}$), we can construct $T^{+}$ and $T^{-}$ such that $I_0=[c,d]$ is a minimal set for the family  $IFS(T^{+}, T^{-} )$ (this construction is as the example of Subsection \ref{subsec2}).

\begin{figure}[h]
\psfrag{i0}{$I_0$}
\psfrag{t+}{$T^{+}$}
\psfrag{dd}{$I_0$}
\psfrag{i1}{$I_0$}
\psfrag{i2}{$I_{1}$}
\psfrag{f}{$f$}
\psfrag{g}{$g$}
\psfrag{i}{$\widehat{I}$}
\psfrag{iii}{$I^{'}$}
\psfrag{ii}{$I^{'}$}
\psfrag{a}{$a$}
\psfrag{b}{$b$}
\psfrag{c}{$c$}
\psfrag{d}{$d$}
\psfrag{x0}{$x_0$}
\psfrag{y0}{$y_0$}

\psfrag{h}{$h$}\psfrag{psi}{$\psi$}\psfrag{r1}{$r_1$}\psfrag{r2}{$r_2$}\psfrag{id}{$Id$}
\begin{center}
\caption{\label{figura6}}
\subfigure[]{\includegraphics[scale=0.13]{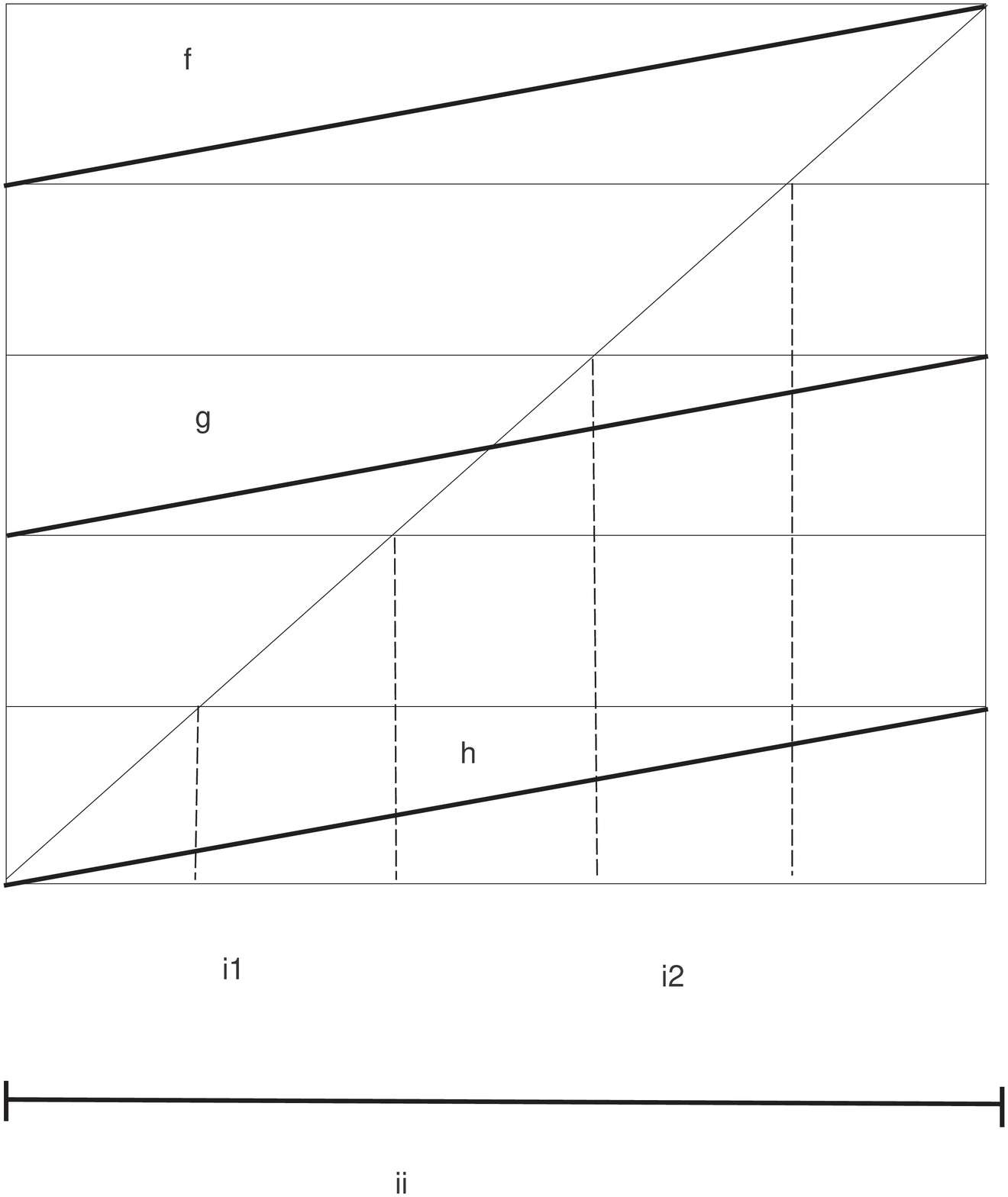}}
\subfigure[]{\includegraphics[scale=0.13]{tmas.eps}}
\end{center}
%\caption{\small{ \it{ In the figure (a) $C_R\cap [0,1/2]\subset \mathcal{A}$. In the figure (b), $\mathcal{A}=C_R$ }}.}
\end{figure}

Let $IFS(f,g,h, T^{+},T^{-})$ and $M=K\cup  \{   \phi (I_{0}): \ \ \phi \in IFS(        f , g, h          \} \cup I_{0} . $

The proof of minimality of $M$ is analogous of above examples. $\Box$

\newpage

\end{document}